\begin{document}

\newcommand{\hide}[1]{}
\newcommand{\tbox}[1]{\mbox{\tiny #1}}
\newcommand{\half}{\mbox{\small $\frac{1}{2}$}}
\newcommand{\sinc}{\mbox{sinc}}
\newcommand{\const}{\mbox{const}}
\newcommand{\trc}{\mbox{trace}}
\newcommand{\intt}{\int\!\!\!\!\int }
\newcommand{\ointt}{\int\!\!\!\!\int\!\!\!\!\!\circ\ }
\newcommand{\eexp}{\mbox{e}^}
\newcommand{\bra}{\left\langle}
\newcommand{\ket}{\right\rangle}
\newcommand{\EPS} {\mbox{\LARGE $\epsilon$}}
\newcommand{\ar}{\mathsf r}
\newcommand{\im}{\mbox{Im}}
\newcommand{\re}{\mbox{Re}}
\newcommand{\bmsf}[1]{\bm{\mathsf{#1}}}
\newcommand{\mpg}[2][1.0\hsize]{\begin{minipage}[b]{#1}{#2}\end{minipage}}

\newcommand{\CC}{\mathbb{C}}
\newcommand{\NN}{\mathbb{N}}
\newcommand{\PP}{\mathbb{P}}
\newcommand{\RR}{\mathbb{R}}
\newcommand{\QQ}{\mathbb{Q}}
\newcommand{\ZZ}{\mathbb{Z}}

\newcommand{\p}{\partial}
\renewcommand{\a}{a}
\renewcommand{\b}{\beta}
\renewcommand{\d}{\delta}
\newcommand{\D}{\Delta}
\newcommand{\g}{\gamma}
\newcommand{\G}{\Gamma}
\renewcommand{\th}{\theta}
\renewcommand{\l}{\lambda}
\renewcommand{\L}{\Lambda}
\renewcommand{\O}{\Omega}
\renewcommand{\o}{\omega}
\newcommand{\s}{\sigma}
\newcommand{\e}{\varepsilon}

\title{Analytical and computational study of the variable inverse sum deg index
}


\author{Walter Carballosa         	\and
             J. A. M\'endez-Berm\'udez  	\and
             Jos\'e M. Rodr\'{\i}guez  	\and
             Jos\'e M. Sigarreta
             }


\institute{Walter Carballosa \at
              Department of Mathematics and Statistics, Florida International University, 11200 SW 8th Street
Miami, FL 33199, USA \\
              \email{waltercarb@gmail.com}           
           \and
           J. A. M\'endez-Berm\'udez \at
	     Instituto de F\'{\i}sica, Benem\'erita Universidad Aut\'onoma de Puebla, Apartado Postal J-48, Puebla 72570, Mexico\\
              \email{jmendezb@ifuap.buap.mx}  
              \and
              Jos\'e M. Rodr\'{\i}guez \at
              Departamento de Matem\'aticas, Universidad Carlos III de Madrid, Avenida de la Universidad 30, 28911 Legan\'es, Madrid, Spain \\
              \email{jomaro@math.uc3m.es}  
              \and
              Jos\'e M. Sigarreta \at
              Facultad de Matem\'aticas, Universidad Aut\'onoma de Guerrero, Carlos E. Adame No.54 Col. Garita, 39650 Acapulco Gro., Mexico \\
              \email{josemariasigarretaalmira@hotmail.com}  
              \and
}

\date{Received: date / Accepted: date}

\maketitle

\begin{abstract}
A large number of graph invariants of the form
$\sum_{uv \in E(G)} F(d_u,d_v)$ are studied in mathematical chemistry,
where $uv$ denotes the edge of the graph $G$ connecting the vertices $u$ and $v$,
and $d_u$ is the degree of the vertex $u$.
Among them the variable inverse sum deg index $ISD_a$, with $F(d_u,d_v)=1/(d_u^a+d_v^a)$,
was found to have applicative properties.
The aim of this paper is to obtain new inequalities for the
variable inverse sum deg index, and to characterize graphs extremal with respect to them.
Some of these inequalities generalize and improve previous results for the inverse sum indeg index.
In addition, we computationally validate some of the obtained inequalities on ensembles of random graphs
and show that the ratio $\bra ISD_a(G) \ket/n$ ($n$ being the order of the graph) depends only on the average 
degree $\bra d \ket$.
\keywords{variable inverse sum deg index \and inverse sum indeg index \and optimization on graphs \and
degree--based topological index}
\subclass{05C09 \and 05C92}
\end{abstract}

\section{Introduction}

Topological indices are parameters associated with chemical compounds that associate the chemical structure with several physical, chemical or biological properties.

A family of degree--based topological indices, named \emph{Adriatic indices}, was put
forward in \cite{VG,V2}. Twenty of them were selected as significant predictors.
One of them, the \emph{inverse sum indeg} index, $ISI$, was singled out
in \cite{VG,V2} as a significant predictor of total surface area of octane isomers.
This index is defined as
$$
ISI(G) = \sum_{uv\in E(G)} \frac{d_u\,d_v}{d_u + d_v}
= \sum_{uv\in E(G)} \frac{1}{\frac{1}{d_u} + \frac{1}{d_v}}\,,
$$
where $uv$ denotes the edge of the graph $G$ connecting the vertices $u$ and $v$,
and $d_u$ is the degree of the vertex $u$.
In the last years there is an increasing interest in the mathematical
properties of this index (see, e.g., \cite{ChenDeng,FAD,MMM,GRS,Mingqiang,RRS,SSV}).

We study here the properties of the \emph{variable inverse sum deg index} defined, for each $a \in \RR$, as
$$
ISD_a(G)
= \sum_{uv \in E(G)} \frac{1}{d_u^a + d_v^a} \,.
$$
Note that $ISD_{-1}$ is the inverse sum indeg index $ISI$.

The variable inverse sum deg index $ISD_{-1.950}$ was selected in \cite{V4} as a significant predictor of standard enthalpy of formation.

The idea behind the variable molecular descriptors is that the variables are determined during the
regression so that the standard error of estimate for a particular studied property is as
small as possible (see, e.g., \cite{MN}).

The aim of this paper is to obtain new inequalities for the
variable inverse sum deg index, and to characterize graphs extremal with respect to them.
Some of these inequalities generalize and improve previous results for the inverse sum indeg index.
Also, we want to remark that many previous results are proved
for connected graphs, but our inequalities hold for both connected and non-connected
graphs.

Throughout this paper, $G=(V (G),E (G))$ denotes an undirected finite simple (without multiple edges and loops)
graph without isolated vertices.
We denote by $n$, $m$, $\D$ and $\d$ the cardinality of the set of vertices of $G$,
the cardinality of the set of edges of $G$,
its maximum degree and its minimum degree, respectively.
Thus, we have $1 \le \d \le \D < n$.
We denote by $N(u)$ the set of neighbors of the vertex $u \in V(G)$.

\section{Inequalities for the $ISD_a$ index}

\begin{proposition} \label{p:m}
If $G$ is a graph with minimum degree $\d$, maximum degree $\D$ and $m$ edges, and $a \in \RR$, then
$$
\begin{aligned}
\frac{m}{2\D^a}
\le ISD_a(G)
\le \frac{m}{2\d^a} \,,
\qquad & \text{if } \, a > 0,
\\
\frac{m}{2\d^a}
\le ISD_a(G)
\le \frac{m}{2\D^a} \,,
\qquad & \text{if } \, a < 0.
\end{aligned}
$$
The equality in each bound is attained if and only if $G$ is regular.
\end{proposition}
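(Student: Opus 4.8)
The plan is to bound each summand $1/(d_u^a + d_v^a)$ individually and then sum over all edges. The key observation is that for every edge $uv \in E(G)$ we have $\d \le d_u, d_v \le \D$, so the quantity $d_u^a + d_v^a$ can be bracketed between its extreme possible values.

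\medskip

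First I would treat the case $a > 0$. Since $t \mapsto t^a$ is increasing for $a > 0$, the bounds $\d \le d_u \le \D$ give $\d^a \le d_u^a \le \D^a$, and similarly for $d_v$. Adding these yields
$$
2\d^a \le d_u^a + d_v^a \le 2\D^a .
$$
Taking reciprocals reverses the inequalities, so
$$
\frac{1}{2\D^a} \le \frac{1}{d_u^a + d_v^a} \le \frac{1}{2\d^a} .
$$
Summing this over the $m$ edges of $G$ produces exactly the claimed two-sided bound for $a > 0$. The case $a < 0$ is handled identically, except that $t \mapsto t^a$ is now decreasing, which swaps the roles of $\d^a$ and $\D^a$ and thereby flips the direction of the bounds, matching the statement.

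\medskip

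For the equality characterization, I would argue that equality in the summed inequality forces equality in every summand. If, say, the upper bound is attained, then $d_u^a + d_v^a = 2\d^a$ (for $a>0$) for every edge $uv$; since each term $d_u^a, d_v^a \ge \d^a$, this forces $d_u = d_v = \d$ on every edge. As $G$ has no isolated vertices, every vertex lies on some edge, so every degree equals $\d$, i.e. $G$ is regular. Conversely, if $G$ is $r$-regular then $d_u^a + d_v^a = 2r^a$ for every edge and $ISD_a(G) = m/(2r^a)$, which coincides with both bounds simultaneously (since $\d = \D = r$). The analogous reasoning applies to the lower bound and to the $a<0$ case.

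\medskip

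I do not anticipate a serious obstacle here, since the argument is a direct per-edge estimate; the only point requiring a little care is the equality analysis, where one must invoke the absence of isolated vertices to conclude that \emph{every} vertex attains the extremal degree (not merely every vertex incident to an edge witnessing equality). The case split on the sign of $a$, and correctly tracking how taking reciprocals and the monotonicity of $t \mapsto t^a$ each reverse inequalities, is the main bookkeeping step to get right.
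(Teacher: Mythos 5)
Your proposal is correct and follows essentially the same route as the paper's own proof: bracket $d_u^a+d_v^a$ between $2\d^a$ and $2\D^a$ edge by edge, take reciprocals, sum over the $m$ edges, and read off the equality case from the per-edge analysis. Your explicit remark that the absence of isolated vertices is what upgrades ``$d_u=d_v=\d$ (or $\D$) on every edge'' to regularity of $G$ is a point the paper leaves implicit, but it is the same argument.
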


\begin{proof}
If $a>0$, then $2 \d^{a} \le d_u^{a}+d_v^{a} \le 2 \D^{a}$ and
$$
\begin{aligned}
ISD_a(G)
& = \sum_{uv \in E(G)} \frac{1}{d_u^{a}+d_v^{a}}
\le \sum_{uv \in E(G)} \frac{1}{2\d^a}
= \frac{m}{2\d^a} \,,
\\
ISD_a(G)
& = \sum_{uv \in E(G)} \frac{1}{d_u^{a}+d_v^{a}}
\ge \sum_{uv \in E(G)} \frac{1}{2\D^a}
= \frac{m}{2\D^a} \,.
\end{aligned}
$$

If $a<0$, then the previous argument gives the converse inequalities.

\smallskip

If $G$ is a regular graph, then the lower and upper bounds are the same, and they are equal to $ISD_a(G)$.

Assume now that the equality in some bound is attained.
Thus, by the previous argument we have either $d_u=d_v=\d$ for every $uv\in E(G)$, or $d_u=d_v=\D$ for every $uv\in E(G)$.
Hence, $G$ is regular.
\end{proof}

In 1998 Bollob\'{a}s and Erd\"{o}s \cite{BE} generalized the Randi\'{c} index by replacing $1/2$ by any real number.
Thus, for $a \in \mathbb{R}\setminus \{0\}$, the \emph{general Randi\'{c} index} of a graph $G$ is defined as
$$
R_\a(G) = \sum_{uv\in E(G)} (d_u d_v)^\a .
$$
The general Randi\'{c} index, also called \emph{variable Zagreb index} in 2004 by Mili{c}evi\'{c} and Nikoli\'{c} \cite{MN}, has been extensively studied \cite{LG}.
Note that $R_{-1/2}$ is the usual Randi\'c index, $R_{1}$ is the second Zagreb index $M_2$,
$R_{-1}$ is the modified Zagreb index \cite{NKMT}, etc.
In Randi\'{c}'s original paper \cite{R}, in addition to the particular case $a=-1/2$, also the index with $a=-1$ was briefly considered.

\medskip

The next result relates the $ISD_a$ and $R_{-a}$ indices.

\begin{theorem} \label{t:m2}
If $G$ is a graph with minimum degree $\d$ and maximum degree $\D$, and $a \in \RR$, then
$$
\begin{aligned}
\frac12 \, \d^{a} R_{-a}(G)
\le ISD_a(G)
\le \frac12 \, \D^{a} R_{-a}(G),
\qquad & \text{if } \, a > 0,
\\
\frac12 \, \D^{a} R_{-a}(G)
\le ISD_a(G)
\le \frac12 \, \d^{a} R_{-a}(G),
\qquad & \text{if } \, a < 0.
\end{aligned}
$$
The equality in each bound is attained if and only if $G$ is regular.
\end{theorem}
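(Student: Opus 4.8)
The plan is to prove both inequalities edge by edge, exactly in the spirit of the proof of Proposition~\ref{p:m}, but now keeping the weight $(d_ud_v)^{-a}$ that converts the per-edge bound into a statement about $R_{-a}$. The key observation is that the summand $1/(d_u^a+d_v^a)$ of $ISD_a$ and the summand $(d_ud_v)^{-a}$ of $R_{-a}$ differ only by the factor $(d_ud_v)^a/(d_u^a+d_v^a)$, so it suffices to estimate this factor on each edge using the elementary degree bounds $\d\le d_u,d_v\le\D$ together with the monotonicity of $t\mapsto t^a$.

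Concretely, for $a>0$ I would first establish the per-edge inequalities
$$
\frac{\d^{a}}{2(d_ud_v)^{a}}
\le \frac{1}{d_u^{a}+d_v^{a}}
\le \frac{\D^{a}}{2(d_ud_v)^{a}} \,.
$$
Clearing denominators, the right-hand bound is equivalent to $2(d_ud_v)^a \le \D^a(d_u^a+d_v^a)$, which follows by adding the two elementary estimates $d_u^a d_v^a \le \D^a d_u^a$ and $d_u^a d_v^a \le \D^a d_v^a$ (each using $d_u,d_v\le\D$ and that $t\mapsto t^a$ is increasing). Likewise the left-hand bound reduces to $\d^a(d_u^a+d_v^a)\le 2(d_ud_v)^a$, obtained by adding $\d^a d_u^a \le d_v^a d_u^a$ and $\d^a d_v^a \le d_u^a d_v^a$. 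Summing over all edges $uv\in E(G)$ and recognizing $\sum_{uv\in E(G)}(d_ud_v)^{-a}=R_{-a}(G)$ then yields the stated two-sided bound. For $a<0$ the same computation goes through verbatim, except that $t\mapsto t^a$ is now decreasing, so every use of $d_u,d_v\le\D$ and $\d\le d_u,d_v$ flips the direction of the corresponding estimate; this is exactly what interchanges the roles of $\d^a$ and $\D^a$ in the second line of the statement.

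Finally, for the equality characterization I would trace equality back through the per-edge estimates. Equality in the upper bound (for $a>0$) forces $d_u^ad_v^a=\D^a d_u^a$ and $d_u^a d_v^a = \D^a d_v^a$ on every edge, that is $d_u=d_v=\D$ for all $uv\in E(G)$; since $G$ has no isolated vertices, this makes $G$ a $\D$-regular graph, and symmetrically equality in the lower bound forces $\d$-regularity. Conversely, on a regular graph $\d=\D$ and all three quantities coincide, so equality holds trivially.

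I do not expect a genuine obstacle here, since the argument is elementary once the factor $(d_ud_v)^a/(d_u^a+d_v^a)$ is isolated. The only point requiring care is the bookkeeping of inequality directions when passing from $a>0$ to $a<0$; the equality conditions themselves are insensitive to this, as they only record when $d_u,d_v$ equal $\D$ or $\d$, which is independent of the sign of $a$.
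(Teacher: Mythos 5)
Your proposal is correct and takes essentially the same approach as the paper: both arguments reduce to the identical per-edge bound $\tfrac12\,\d^{a}(d_ud_v)^{-a} \le 1/(d_u^{a}+d_v^{a}) \le \tfrac12\,\D^{a}(d_ud_v)^{-a}$ for $a>0$ (reversed for $a<0$) and then sum over edges, with equality traced back to $d_u=d_v\in\{\d,\D\}$ on every edge. The only difference is cosmetic: the paper verifies the per-edge bound by rewriting the summand as $(d_ud_v)^{-a}/(d_u^{-a}+d_v^{-a})$ and bounding $d_u^{-a}+d_v^{-a}$ between $2\D^{-a}$ and $2\d^{-a}$, which is algebraically equivalent to your cross-multiplied estimates.
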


\begin{proof}
We have
$$
ISD_a(G)
= \sum_{uv \in E(G)} \frac{1}{d_u^{a}+d_v^{a}}
= \sum_{uv \in E(G)} \frac{(d_u d_v)^{-a}}{d_u^{-a}+d_v^{-a}} \,.
$$

If $a>0$, then $2 \D^{-a} \le d_u^{-a}+d_v^{-a} \le 2 \d^{-a}$, and
$$
\begin{aligned}
\frac12 \, \d^{a} R_{-a}(G)
= \sum_{uv \in E(G)} \frac{(d_u d_v)^{-a}}{2\d^{-a}}
\le \sum_{uv \in E(G)} \frac{(d_u d_v)^{-a}}{d_u^{-a}+d_v^{-a}}
\\
\le \sum_{uv \in E(G)} \frac{(d_u d_v)^{-a}}{2\D^{-a}}
= \frac12 \, \D^{a} R_{-a}(G) .
\end{aligned}
$$

If $a<0$, then the previous argument gives the converse inequalities.

\smallskip

If $G$ is a regular graph, then the lower and upper bounds are the same, and they are equal to $ISD_a(G)$.

Assume now that the equality in some bound is attained.
Thus, by the previous argument we have either $d_u=d_v=\d$ for every $uv\in E(G)$, or $d_u=d_v=\D$ for every $uv\in E(G)$.
Hence, $G$ is regular.
\end{proof}

The following result relates the $ISD_a$ and $ISD_{-a}$ indices.

\begin{theorem} \label{t:a-a}
If $G$ is a graph with minimum degree $\d$ and maximum degree $\D$, and $a \in \RR$, then
$$
\begin{aligned}
\D^{-2a} ISD_{-a}(G)
\le ISD_a(G)
\le \d^{-2a} ISD_{-a}(G),
\qquad & \text{if } \, a > 0,
\\
\d^{-2a} ISD_{-a}(G)
\le ISD_a(G)
\le \D^{-2a} ISD_{-a}(G),
\qquad & \text{if } \, a < 0.
\end{aligned}
$$
The equality in each bound is attained if and only if $G$ is regular.
\end{theorem}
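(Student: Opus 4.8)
The plan is to mirror the strategy used in Theorem \ref{t:m2}: I will express the summand of $ISD_a$ as a pointwise multiple of the summand of $ISD_{-a}$, and then bound that multiplier using only $\d$ and $\D$. This keeps the argument parallel to the two theorems already proved, so the equality analysis will come out in the same form.

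First I would record the algebraic identity linking the two indices edge by edge. Since
$$
d_u^{-a}+d_v^{-a}
= \frac{1}{d_u^{a}}+\frac{1}{d_v^{a}}
= \frac{d_u^{a}+d_v^{a}}{(d_u d_v)^{a}},
$$
it follows that
$$
ISD_{-a}(G)
= \sum_{uv \in E(G)} \frac{1}{d_u^{-a}+d_v^{-a}}
= \sum_{uv \in E(G)} \frac{(d_u d_v)^{a}}{d_u^{a}+d_v^{a}}.
$$
Comparing this with $ISD_a(G)=\sum_{uv\in E(G)} (d_u^{a}+d_v^{a})^{-1}$, each summand of $ISD_a$ equals $(d_u d_v)^{-a}$ times the corresponding summand of $ISD_{-a}$, and every summand of $ISD_{-a}$ is positive.

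Next, since $\d \le d_u,d_v \le \D$ for every edge $uv$, we have $\d^{2} \le d_u d_v \le \D^{2}$. For $a>0$ this gives $\D^{-2a} \le (d_u d_v)^{-a} \le \d^{-2a}$; inserting these bounds into the pointwise factor above and summing over $E(G)$ yields
$$
\D^{-2a}\,ISD_{-a}(G)
\le ISD_a(G)
\le \d^{-2a}\,ISD_{-a}(G).
$$
For $a<0$ the monotonicity of the power $t\mapsto t^{-a}$ reverses, so the two inequalities exchange, producing the second display of the statement.

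Finally, for the equality characterization I would argue exactly as before. If $G$ is regular the lower and upper bounds coincide and equal $ISD_a(G)$. Conversely, because all summands are positive, equality in a summed bound forces the extreme value of $(d_u d_v)^{-a}$ on every edge, i.e.\ $d_u d_v = \d^{2}$ for all $uv\in E(G)$ or $d_u d_v = \D^{2}$ for all $uv\in E(G)$. Under the constraint $\d \le d_u,d_v \le \D$ the product attains $\D^{2}$ only when $d_u=d_v=\D$, and attains $\d^{2}$ only when $d_u=d_v=\d$, so either alternative forces $G$ to be regular. There is no genuine obstacle here; the only steps requiring care are tracking the reversal of the inequalities when $a<0$ and confirming that the product extremum truly pins each degree to $\d$ or $\D$ rather than permitting a mixed pair.
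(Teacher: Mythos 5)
Your proof is correct and follows essentially the same route as the paper: both rewrite each summand of $ISD_a$ as $(d_u d_v)^{-a}$ times the corresponding summand of $ISD_{-a}$, bound $(d_u d_v)^{-a}$ between $\D^{-2a}$ and $\d^{-2a}$ via $\d^2 \le d_u d_v \le \D^2$, and read off the equality case from the pointwise bounds. Your explicit check that $d_u d_v = \D^2$ (resp.\ $\d^2$) forces $d_u = d_v = \D$ (resp.\ $\d$) is a slightly more careful rendering of the equality analysis the paper leaves implicit, but it is the same argument.
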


\begin{proof}
We have
$$
ISD_a(G)
= \sum_{uv \in E(G)} \frac{1}{d_u^{a}+d_v^{a}}
= \sum_{uv \in E(G)} \frac{(d_u d_v)^{-a}}{d_u^{-a}+d_v^{-a}} \,.
$$

Similarly, we obtain the result if $a>0$.
$$
\begin{aligned}
\D^{-2a} ISD_{-a}(G)
= \sum_{uv \in E(G)} \frac{\D^{-2a}}{d_u^{-a}+d_v^{-a}}
\le ISD_a(G)
\\
\le \sum_{uv \in E(G)} \frac{\d^{-2a}}{d_u^{-a}+d_v^{-a}}
= \d^{-2a} ISD_{-a}(G) .
\end{aligned}
$$

If $a<0$, then the previous argument gives the converse inequalities.

\smallskip

If $G$ is a regular graph, then the lower and upper bounds are the same, and they are equal to $ISD_a(G)$.

If the equality in some bound is attained, by the previous argument we have either $d_u=d_v=\d$ for every $uv\in E(G)$, or $d_u=d_v=\D$ for every $uv\in E(G)$.
Therefore, $G$ is regular.
\end{proof}

The \emph{general sum-connectivity index} was defined in \cite{ZT2} as
$$
\chi_{_{\a}}(G) = \sum_{uv\in E(G)} (d_u+ d_v)^\a\,.
$$
Note that $\chi_{_{1}}$ is the first Zagreb index $M_1$, $2\chi_{_{-1}}$ is the harmonic index $H$,
$\chi_{_{-1/2}}$ is the sum-connectivity index $\chi$, etc.

\smallskip

The following result relates the variable inverse sum deg
and the general sum-connectivity indices.

\begin{theorem} \label{t:chi}
If $G$ is a graph and $a \in \RR \setminus \{0,1\}$, then
\begin{eqnarray}
\label{Eq1}
\chi_{_{-a}}(G) < ISD_a(G) \le 2^{a-1} \chi_{_{-a}}(G),
\qquad & \text{if } \, a > 1,
\\
\label{Eq2}
2^{a-1} \chi_{_{-a}}(G) \le ISD_a(G) < \chi_{_{-a}}(G),
\qquad & \text{if } \, 0< a < 1,
\\
\label{Eq3}
ISD_a(G) \le 2^{a-1} \chi_{_{-a}}(G),
\qquad & \text{if } \, a < 0.
\end{eqnarray}
The equality in the first or third upper bound or in the second lower bound is attained if and only if each connected component of $G$ is regular.
\end{theorem}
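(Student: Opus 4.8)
The key algebraic fact driving this result is the comparison between $d_u^a + d_v^a$ and $(d_u + d_v)^a$. My plan is to reduce the entire theorem to an elementary inequality on a single real variable, then apply it edge by edge to the sum defining $ISD_a(G)$.

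First I would isolate, for each edge $uv \in E(G)$, the ratio that matters:
\[
\frac{1}{d_u^a + d_v^a} \quad \text{compared with} \quad \frac{1}{(d_u+d_v)^a}\,.
\]
Setting $t = d_u/d_v$ (say $d_u \le d_v$, so $t \in (0,1]$) and factoring out $d_v^a$, this amounts to comparing $t^a + 1$ with $(t+1)^a$. Thus I define $f(t) = (t+1)^a / (t^a + 1)$ on $(0,1]$ and study its range. The behavior splits according to the sign of $a$ and whether $a \gtrless 1$, which is exactly why the theorem has three cases. The clean endpoints are $f(1) = 2^{a}/2 = 2^{a-1}$ and the limit $f(t) \to 1$ as $t \to 0^+$. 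So the single-variable claim I want is: for $a > 1$, one has $1 < (t+1)^a/(t^a+1) \le 2^{a-1}$ on $(0,1]$ with the upper equality only at $t=1$; for $0 < a < 1$ the inequalities reverse to $2^{a-1} \le (t+1)^a/(t^a+1) < 1$; and for $a < 0$ one has $(t+1)^a/(t^a+1) \le 2^{a-1}$, again with equality only at $t=1$. I would prove these by showing monotonicity of $f$ (equivalently, analyzing the sign of $\log f(t) = a\log(t+1) - \log(t^a+1)$ via its derivative), or more simply by the convexity/concavity of $x \mapsto x^a$: for $a>1$ the map is convex, giving $d_u^a + d_v^a \le (d_u+d_v)^a$ with a strict-unless-equal flavor, while the power-mean bound $d_u^a + d_v^a \ge 2^{1-a}(d_u+d_v)^a$ supplies the other side; for $0<a<1$ concavity reverses both; for $a<0$ only the convexity-type bound survives in the stated direction.

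Next I would translate these pointwise bounds on $1/(d_u^a+d_v^a)$ versus $(d_u+d_v)^{-a}$ into the summed statement. Since $\chi_{_{-a}}(G) = \sum_{uv \in E(G)} (d_u+d_v)^{-a}$ and $ISD_a(G) = \sum_{uv} 1/(d_u^a + d_v^a)$, summing the edgewise inequality over all edges yields each of \eqref{Eq1}, \eqref{Eq2}, \eqref{Eq3} directly. Care is needed with the direction of the inequality when passing from $d_u^a+d_v^a$ to its reciprocal, and when $-a$ is negative versus positive; I would keep the comparison at the level of reciprocals $1/(d_u^a+d_v^a)$ throughout to avoid sign slips. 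The strict inequalities (the ``$<$'' in \eqref{Eq1} and \eqref{Eq2}) come from the fact that $t^a + 1 > (t+1)^a$ is \emph{never} an equality for $t>0$ when $a>1$ (and symmetrically for $0<a<1$): the convexity there is strict but the crossover at $2^{a-1}$ is only approached, never met on the lower/upper side, whereas $d_u = d_v$ forces the $2^{a-1}$ bound to be exact.

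Finally I would handle the equality characterization. The sum of edgewise inequalities is an equality precisely when every edge attains its pointwise equality, i.e. $d_u = d_v$ for every $uv \in E(G)$. This condition means exactly that every vertex has the same degree as each of its neighbors, which is equivalent to each connected component of $G$ being regular (degrees are constant along edges, hence constant on each component, though possibly differing between components). I expect the main obstacle to be the one-variable analysis of $f(t)=(t+1)^a/(t^a+1)$ — specifically verifying the strictness and the monotonicity on $(0,1]$ cleanly across all three regimes of $a$ — rather than the summation or the equality argument, which are routine once the pointwise bound is in hand. I would therefore state and prove the single-variable inequality as the crux, with the remaining steps following mechanically.
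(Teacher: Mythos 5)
Your proposal is correct, and its skeleton matches the paper's: establish a pointwise bound on the edgewise ratio comparing $1/(d_u^a+d_v^a)$ with $(d_u+d_v)^{-a}$, sum over edges, and characterize equality via $d_u=d_v$ on every edge (hence regularity of each component). Where you genuinely differ is the proof of the key pointwise estimate. The paper studies $f(x,y)=(x+y)^a/(x^a+y^a)$ by constrained optimization: Lagrange multipliers applied to $g(x,y)=(x+y)^a$ on the level set $x^a+y^a=1$ give the unique critical point $x=y=2^{-1/a}$ with value $2^{a-1}$; the boundary limits ($g\to 1$ as $x\to 0^+$ when $a>0$, and $g\to 0$ when $a<0$, where the constraint forces $x,y>1$) pin down the ranges, and homogeneity transfers the bounds to all $x,y>0$. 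You instead use homogeneity at the outset, setting $t=d_u/d_v\in(0,1]$, and analyze the one-variable function $f(t)=(t+1)^a/(t^a+1)$ by monotonicity (its logarithmic derivative is $a(1-t^{a-1})/\bigl((t+1)(t^a+1)\bigr)$, of constant sign in each regime), or, even more cheaply, by convexity: midpoint convexity of $x\mapsto x^a$ gives exactly $(x+y)^a\le 2^{a-1}(x^a+y^a)$ for $a>1$ and for $a<0$ (reversed for $0<a<1$), with equality iff $x=y$, while super/subadditivity of $x^a$ supplies the strict comparison with $1$. This avoids Lagrange multipliers entirely and makes the equality analysis transparent, which is a real simplification of the crux; the paper's optimization argument is heavier but requires no clever substitution. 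Two slips in your write-up, neither fatal: (i) the limit $f(t)\to 1$ as $t\to 0^+$ holds only for $a>0$; for $a<0$ one has $t^a\to\infty$, so $f(t)\to 0$ --- harmless, since in that case you only claim the upper bound $2^{a-1}$; (ii) in your strictness discussion the inequality for $a>1$ should read $t^a+1<(t+1)^a$, not the reverse; as written, the direction is the one valid for $0<a<1$.
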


\begin{proof}
We want to compute the minimum and maximum values of the function
$f: \RR^+ \times \RR^+ \to \RR^+$
given by
$$
f(x,y)
= \frac{(x+y)^{a}}{x^{a}+y^{a}} \,.
$$
In order to do that, we are going to compute the extremal values of $g(x,y) = (x+y)^{a}$ with the restrictions
$h(x,y)= x^{a} +y^{a}=1$, $x,y>0$.
If $(x,y)$ is a critical point, then there exists $\l \in \RR$ such that
$$
\begin{aligned}
a(x+y)^{a-1}
& = \l\, a\, x^{a-1},
\\
a(x+y)^{a-1}
& = \l\, a\, y^{a-1},
\end{aligned}
$$
and so, $x=y$;
this fact and the equality $x^{a} +y^{a}=1$ give $x=y= 2^{-1/a}$ and $g(2^{-1/a},2^{-1/a}) = 2^{a-1}$.

If $a>0$ and $x \to 0^+$ (respectively, $y \to 0^+$), then $y \to 1$ (respectively, $x \to 1$) and $g(x,y) \to 1$.

If $a>1$, then $1 < g(x,y) \le 2^{a-1}$ and the upper bound is attained if and only if $x=y$.
By homogeneity, we have $1 < f(x,y) \le 2^{a-1}$ for every $x,y>0$ and the upper bound is attained if and only if $x=y$.

If $0<a<1$, then $2^{a-1} \le g(x,y) < 1$ and the lower bound is attained if and only if $x=y$.
Thus, $2^{a-1} \le f(x,y) < 1$ for every $x,y>0$ and the lower bound is attained if and only if $x=y$.

If $a<0$, then $x,y>1$.
If $x \to 1^+$ (respectively, $y \to 1^+$), then $y \to \infty$ (respectively, $x \to \infty$) and $g(x,y) \to 0$.
Hence, $0 < g(x,y) \le 2^{a-1}$ and the upper bound is attained if and only if $x=y$.
Thus, $0 < f(x,y) \le 2^{a-1}$ for every $x,y>0$ and the upper bound is attained if and only if $x=y$.

Note that if $c_a \le f(x,y) \le C_a$, then
$$
c_a \frac{1}{( d_u + d_v )^{a}}
\le \frac{1}{d_u^{a} + d_v^{a}}
\le C_a \frac{1}{( d_u + d_v )^{a}}
$$
for every $uv\in E(G)$ and, consequently, $c_\a \chi_{_{-a}}(G) \le ISD_a(G) \le C_\a \chi_{_{-a}}(G)$.
These facts give the inequalities.

\smallskip

If $G$ is a connected $\d$-regular graph with $m$ edges, then
$$
2^{a-1} \chi_{_{-a}}(G)
= 2^{a-1} (2\d)^{-a} m
= \frac{m}{2\d^{a}}
= ISD_a(G).
$$
By linearity, the equality
$2^{a-1} \chi_{_{-a}}(G)
= ISD_a(G)$
also holds if each connected component of $G$ is regular.

Assume now that the equality in the first or third upper bound or in the second lower bound is attained.
Thus, the previous argument gives that $d_u=d_v$
for every $uv\in E(G)$ and, consequently,
each connected component of $G$ is regular.
\end{proof}

Note that Theorem \ref{t:chi}, with $a =-1$, gives $ISI(G) \le M_{1}(G)/4$, a known inequality (see \cite[Theorem 4]{SSV}).
Hence, Theorem \ref{t:chi} generalizes \cite[Theorem 4]{SSV}.

\begin{remark}
Note that if we take limits as $a \to 1$ in Theorem \ref{t:chi}, then we obtain by continuity the trivial equality
$ISD_1(G) = \chi_{_{-1}}(G)$.
\end{remark}

The geometric-arithmetic index was introduced in \cite{VF} as
$$
GA(G) = \sum_{uv\in E(G)}\frac{2\sqrt{d_u d_v}}{d_u + d_v} \,.
$$
Although it was introduced in $2009$, there are many papers dealing with this index
(see, e.g., \cite{DGF}, \cite{DGF2}, \cite{MR}, \cite{MH}, \cite{PST}, \cite{RRS2}, \cite{RS2}, \cite{RS3}, \cite{S}, \cite{VF} and the references therein).
The predicting ability of the $GA$ index compared with
Randi\'c index is reasonably better (see \cite[Table 1]{DGF}).
The graphic in \cite[Fig.7]{DGF} (from \cite[Table 2]{DGF}, \cite{TRC}) shows
that there exists a good linear correlation between $GA$ and the heat of formation of benzenoid hydrocarbons
(the correlation coefficient is equal to $0.972$).
Furthermore, the improvement in
prediction with $GA$ index comparing to Randi\'c index in the case of standard
enthalpy of vaporization is more than 9$\%$. That is why one can think that $GA_1$ index
should be considered in the QSPR/QSAR researches.

\smallskip

The following result relates the variable inverse sum deg
and the geometric-arithmetic indices.

\begin{theorem} \label{t:ga}
If $G$ is a graph and $a \in \RR$, then
$$
\begin{aligned}
ISD_a(G) \ge \frac12 \, \D^{-a} GA(G),
\qquad & \text{if } \, a > 0,
\\
ISD_a(G) \ge \frac12 \, \d^{-a} GA(G),
\qquad & \text{if } \, a < 0.
\end{aligned}
$$
The equality in each bound is attained if and only if $G$ is a regular graph.
\end{theorem}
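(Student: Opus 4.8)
The plan is to prove both inequalities edge by edge, comparing the summand of $ISD_a$ with the corresponding summand of $\frac12 \D^{-a} GA$ (for $a>0$) or $\frac12 \d^{-a} GA$ (for $a<0$) on each edge, and then summing over $E(G)$. First I would rewrite the right-hand sides termwise, noting that
$$
\tfrac12 \, \D^{-a} GA(G) = \sum_{uv \in E(G)} \frac{\D^{-a}\sqrt{d_u d_v}}{d_u + d_v},
\qquad
\tfrac12 \, \d^{-a} GA(G) = \sum_{uv \in E(G)} \frac{\d^{-a}\sqrt{d_u d_v}}{d_u + d_v}.
$$
Thus for $a>0$ it suffices to establish, for every edge $uv$, the per-edge inequality $\frac{1}{d_u^a + d_v^a} \ge \frac{\D^{-a}\sqrt{d_u d_v}}{d_u + d_v}$, and analogously with $\d$ replacing $\D$ when $a<0$.

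To prove the per-edge inequality for $a>0$, I would recast it as $\frac{d_u + d_v}{d_u^a + d_v^a} \ge \D^{-a}\sqrt{d_u d_v}$ and then bound numerator and denominator separately. The AM--GM inequality gives $d_u + d_v \ge 2\sqrt{d_u d_v}$, bounding the numerator from below, and since $d_u, d_v \le \D$ and $a>0$ we have $d_u^a + d_v^a \le 2\D^a$, bounding the denominator from above. Chaining these two bounds yields $\frac{d_u + d_v}{d_u^a + d_v^a} \ge \frac{2\sqrt{d_u d_v}}{2\D^a} = \D^{-a}\sqrt{d_u d_v}$, as desired. For $a<0$ the map $x \mapsto x^a$ is decreasing, so the degree bound reverses to $d_u^a + d_v^a \le 2\d^a$, and the same chaining gives the version with $\d^{-a}$. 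Summing the per-edge inequalities over all edges then produces the two claimed bounds.

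For the equality characterization, I would argue that equality in the summed inequality forces equality on every edge, which in turn requires equality simultaneously in the two ingredient bounds. Equality in AM--GM forces $d_u = d_v$, while equality in the degree bound forces $d_u = \D$ (case $a>0$) or $d_u = \d$ (case $a<0$). Since $G$ has no isolated vertices every vertex lies on some edge, so all degrees coincide and $G$ is regular. Conversely, if $G$ is $r$-regular then $GA(G) = m$ and $ISD_a(G) = m/(2r^a)$, so $\frac12 \D^{-a} GA(G) = \frac12 r^{-a} m = m/(2r^a) = ISD_a(G)$ (and likewise for $a<0$), giving equality; this handles the ``if'' direction by a direct computation.

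I do not anticipate a serious obstacle here, as the argument is a clean termwise comparison. The only points requiring care are keeping the direction of the monotonicity bound on $d_u^a + d_v^a$ correct across the sign of $a$, and, in the equality analysis, verifying that the two extremal conditions (AM--GM equality and the extreme-degree condition) must hold \emph{together} on each edge, so that regularity is genuinely forced rather than merely one of the two degrees being extremal.
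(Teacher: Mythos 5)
Your proof is correct, and it establishes the key per-edge inequality by a genuinely more elementary route than the paper. The paper pursues the same termwise strategy but proves the pointwise bound by calculus: it introduces $V(x,y)=\frac{x+y}{2\sqrt{xy}\,(x^{a}+y^{a})}$ on $[\delta,\Delta]\times[\delta,\Delta]$, computes $\partial V/\partial x$, shows $V$ is monotone in $x$ on the relevant half of the square, and concludes $V\ge\frac12\,\Delta^{-a}$ for $a>0$ (resp.\ $V\ge\frac12\,\delta^{-a}$ for $a<0$), the location of the minimum being what delivers the equality characterization. You instead decompose the same per-edge bound into AM--GM on the numerator and the monotone power bound $d_u^{a}+d_v^{a}\le 2\Delta^{a}$ (resp.\ $\le 2\delta^{a}$) on the denominator, and chain the two, which needs no differentiation. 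Your equality analysis is also sound, and in fact slightly redundant: equality in the power bound alone already forces $d_u=d_v=\Delta$ (resp.\ $=\delta$) on every edge, after which the absence of isolated vertices gives regularity exactly as you argue; your converse computation for regular graphs matches the paper's. What the paper's calculus template buys is reusability --- the identical machinery handles the companion upper bound against the arithmetic-geometric index in Theorem \ref{t:ag}, where a maximum must be located instead --- while your chaining buys brevity and a transparent equality case. Incidentally, your argument also sidesteps a small slip in the paper's $a<0$ case, where the minimizer is stated as $x=y=\Delta$ but should be $x=y=\delta$, as the paper's own subsequent conclusion $d_u=d_v=\delta$ indicates.
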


\begin{proof}
We are going to compute the minimum and maximum values of the function
$V: [\d,\D] \times [\d,\D] \to \RR^+$
given by
$$
V(x,y)
= \frac{x+y}{2\sqrt{xy}\,(x^{a}+y^{a})} \,.
$$
We have
$$
\begin{aligned}
\frac{\p V}{\p x}\,(x,y)
& = \frac{1}{2\sqrt{y}} \; \frac{x^{1/2}(x^{a}+y^{a})-(x+y)\big( \frac12\,x^{-1/2}(x^{a}+y^{a}) + x^{1/2}ax^{a-1}\big)}{x(x^{a}+y^{a})^2}
\\
& = \frac{2x(x^{a}+y^{a})-(x+y)\big( x^{a}+y^{a} + 2ax^{a}\big)}{4x^{3/2}y^{1/2}(x^{a}+y^{a})^2}
\\
& = \frac{(x-y)(x^{a}+y^{a})-2a(x+y)x^{a}}{4x^{3/2}y^{1/2}(x^{a}+y^{a})^2}
\,.
\end{aligned}
$$

Assume first that $a>0$.
By symmetry, we can assume that $x \le y$.
Thus,
$\p V/\p x (x,y)<0$ for $\d \le x \le y \le \D$, and so,
$$
V(x,y)
\ge V(y,y)
= \frac{1}{2y^{a}}
\ge \frac12 \, \D^{-a},
$$
and the equality in the bound is attained if and only if $x=y=\D$.
Hence,
$$
\begin{aligned}
\frac1{d_u^{a}+d_v^{a}}
& \ge \frac12 \, \D^{-a} \, \frac{2\sqrt{d_ud_v}}{d_u+d_v} \,,
\\
ISD_a(G)
= \sum_{uv \in E(G)} \frac1{d_u^{a}+d_v^{a}}
& \ge \frac12 \, \D^{-a} \!\!\!\!\! \sum_{uv \in E(G)} \!\! \frac{2\sqrt{d_ud_v}}{d_u+d_v}
= \frac12 \, \D^{-a} GA(G) ,
\end{aligned}
$$
and the equality in the bound is attained if and only if $d_u=d_v=\D$ for every $uv \in E(G)$, i.e., $G$ is a regular graph.

\medskip

Assume now that $a<0$.
We can assume that $y \le x$.
Thus,
$\p V/\p x (x,y)>0$ for $\d \le y \le x \le \D$, and so,
$$
V(x,y)
\ge V(y,y)
= \frac{1}{2y^{a}}
\ge \frac12 \, \d^{-a},
$$
and the equality in the bound is attained if and only if $x=y=\D$.
Hence,
$$
\begin{aligned}
\frac1{d_u^{a}+d_v^{a}}
& \ge \frac12 \, \d^{-a} \, \frac{2\sqrt{d_ud_v}}{d_u+d_v} \,,
\\
ISD_a(G)
= \sum_{uv \in E(G)} \frac1{d_u^{a}+d_v^{a}}
& \ge \frac12 \, \d^{-a} \!\!\!\!\! \sum_{uv \in E(G)} \!\! \frac{2\sqrt{d_ud_v}}{d_u+d_v}
= \frac12 \, \d^{-a} AG(G) ,
\end{aligned}
$$
and the equality in the bound is attained if and only if $d_u=d_v=\d$ for every $uv \in E(G)$, i.e., $G$ is a regular graph.
\end{proof}

As an inverse variant of the geometric-arithmetic index, in 2015, the arithmetic-geometric index was introduced in \cite{SK1} as
$$
AG(G) = \sum_{uv\in E(G)}\frac{d_u + d_v}{2\sqrt{d_u d_v}} \,.
$$
In \cite{MRSS} it is shown that the arithmetic-geometric index has a good predictive power
for entropy of octane isomers.
The paper \cite{ZTC} studied spectrum and energy of arithmetic-geometric
matrix, in which the sum of all elements is equal to 2$AG$. Other bounds
of the arithmetic-geometric energy of graphs appeared in \cite{GG}, \cite{DG}.
The paper \cite{VP} studies optimal $AG$-graphs for several classes of graphs.
In \cite{CGMPP}, \cite{CWTW}, \cite{MRSS} and \cite{RSST} there are more bounds on the $AG$ index.

\smallskip

The following result relates the variable inverse sum deg
and the arithmetic-geometric indices.

\begin{theorem} \label{t:ag}
If $G$ is a graph and $a \in \RR$, then
$$
\begin{aligned}
ISD_a(G) \le \frac12 \, \d^{-a} AG(G),
\qquad & \text{if } \, a > 0,
\\
ISD_a(G) \le \frac12 \, \D^{-a} AG(G),
\qquad & \text{if } \, a < 0.
\end{aligned}
$$
The equality in each bound is attained if and only if $G$ is a regular graph.
\end{theorem}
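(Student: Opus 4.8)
The plan is to bound, edge by edge, the ratio of the $ISD_a$ summand to the $AG$ summand, in the same spirit as the proof of Theorem~\ref{t:ga} but now seeking an \emph{upper} bound (the $AG$ summand is the reciprocal of the $GA$ summand, which is why the inequality reverses). Concretely, for $a>0$ I would establish that for every edge $uv$,
$$
\frac{1}{d_u^{a}+d_v^{a}} \le \frac12\,\d^{-a}\,\frac{d_u+d_v}{2\sqrt{d_u d_v}},
$$
which after clearing denominators is equivalent to $(d_u^{a}+d_v^{a})(d_u+d_v) \ge 4\d^{a}\sqrt{d_u d_v}$. Summing this over all edges immediately yields $ISD_a(G) \le \frac12\,\d^{-a} AG(G)$, and the case $a<0$ is handled identically with $\d$ replaced by $\D$.

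Rather than the partial-derivative computation used for Theorem~\ref{t:ga}, I expect the cleanest route is to split the required edge inequality into two elementary pieces. For $a>0$ the map $t\mapsto t^{a}$ is increasing on $[\d,\D]$, so $d_u^{a}+d_v^{a}\ge 2\d^{a}$; and the AM--GM inequality gives $d_u+d_v\ge 2\sqrt{d_u d_v}$. Multiplying the first bound by $(d_u+d_v)$ and then inserting the second,
$$
(d_u^{a}+d_v^{a})(d_u+d_v) \ge 2\d^{a}(d_u+d_v) \ge 2\d^{a}\cdot 2\sqrt{d_u d_v} = 4\d^{a}\sqrt{d_u d_v},
$$
which is exactly the claim. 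For $a<0$ the map $t\mapsto t^{a}$ is decreasing, so $d_u^{a}+d_v^{a}\ge 2\D^{a}$, and the same two-step argument gives $(d_u^{a}+d_v^{a})(d_u+d_v) \ge 4\D^{a}\sqrt{d_u d_v}$, producing the stated bound with $\D^{-a}$.

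For the equality characterization I would track when each of the two sub-inequalities is tight. In the case $a>0$, the bound $d_u^{a}+d_v^{a}\ge 2\d^{a}$ is an equality precisely when $d_u=d_v=\d$, while AM--GM is an equality precisely when $d_u=d_v$; hence the per-edge inequality is an equality iff $d_u=d_v=\d$. Equality in the summed inequality therefore forces every edge to have both endpoints of degree $\d$, and since $G$ has no isolated vertices this means $G$ is $\d$-regular. The case $a<0$ is symmetric, forcing $d_u=d_v=\D$ on every edge, i.e. $G$ is $\D$-regular. Conversely, if $G$ is regular the two sides coincide by direct substitution, as in Proposition~\ref{p:m}. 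The only real subtlety is verifying that the two sub-inequalities are \emph{simultaneously} tight in the equality analysis; everything else reduces to the two standard bounds above.
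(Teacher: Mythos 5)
Your proof is correct, and it reaches the same per-edge inequality and the same equality characterization as the paper, but by a genuinely different and more elementary route. The paper's proof is a calculus argument: it introduces $U(x,y)=\frac{2\sqrt{xy}}{(x+y)(x^{a}+y^{a})}$ on $[\d,\D]\times[\d,\D]$, computes $\p U/\p x$, and uses the sign of the derivative (for $x\ge y$ when $a>0$, for $x\le y$ when $a<0$) to show $U$ is maximized on the diagonal, giving $U(x,y)\le U(y,y)=\frac{1}{2y^{a}}\le\frac12\,\d^{-a}$ (resp.\ $\le\frac12\,\D^{-a}$), with equality only at $x=y=\d$ (resp.\ $x=y=\D$); summing over edges then finishes the proof exactly as you do. You replace the derivative computation by the factorization $(d_u^{a}+d_v^{a})(d_u+d_v)\ge 2\d^{a}(d_u+d_v)\ge 4\d^{a}\sqrt{d_u d_v}$ (with $\D$ in place of $\d$ when $a<0$), i.e.\ by monotonicity of $t\mapsto t^{a}$ plus AM--GM, and your equality analysis is clean and correct: overall equality forces the first step to be tight, which gives $d_u=d_v=\d$ (resp.\ $d_u=d_v=\D$) and automatically makes AM--GM tight, so the per-edge equality condition coincides with the paper's; the passage from ``every edge has both endpoints of degree $\d$ (or $\D$)'' to regularity via the absence of isolated vertices is the same in both proofs, as is the converse direction by direct substitution. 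What your approach buys is brevity and the elimination of calculus, with a transparent equality case; what the paper's approach buys is uniformity of template with Theorem~\ref{t:ga} (the same function-analysis scheme applied to $V(x,y)$ there and $U(x,y)$ here) and full monotonicity information about the edge weight on the box $[\d,\D]^2$, which is more than is needed for this particular statement.
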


\begin{proof}
We are going to compute the minimum and maximum values of the function
$U: [\d,\D] \times [\d,\D] \to \RR^+$
given by
$$
U(x,y)
= \frac{2\sqrt{xy}}{(x+y)(x^{a}+y^{a})} \,.
$$
We have
$$
\begin{aligned}
\frac{\p U}{\p x}\,(x,y)
& = \sqrt{y} \; \frac{x^{-1/2}(x+y)(x^{a}+y^{a})-2x^{1/2}\big( x^{a}+y^{a} + (x+y)ax^{a-1}\big)}{(x+y)^2(x^{a}+y^{a})^2}
\\
& = \sqrt{y} \; \frac{(x+y)(x^{a}+y^{a})-2\big( x(x^{a}+y^{a}) + (x+y)ax^{a} \big)}{\sqrt{x} \,(x+y)^2(x^{a}+y^{a})^2}
\\
& = \sqrt{y} \; \frac{(y-x)(x^{a}+y^{a})-2(x+y)ax^{a}}{\sqrt{x} \,(x+y)^2(x^{a}+y^{a})^2}
\,.
\end{aligned}
$$

Assume first that $a>0$.
By symmetry, we can assume that $x \ge y$.
Thus,
$\p U/\p x (x,y)<0$ for $\d \le y \le x \le \D$, and so,
$$
U(x,y)
\le U(y,y)
= \frac{1}{2y^{a}}
\le \frac12 \, \d^{-a},
$$
and the equality in the bound is attained if and only if $x=y=\d$.
Hence,
$$
\begin{aligned}
\frac1{d_u^{a}+d_v^{a}}
& \le \frac12 \, \d^{-a} \, \frac{d_u+d_v}{2\sqrt{d_ud_v}} \,,
\\
ISD_a(G)
= \sum_{uv \in E(G)} \frac1{d_u^{a}+d_v^{a}}
& \le \frac12 \, \d^{-a} \!\!\!\!\! \sum_{uv \in E(G)} \!\! \frac{d_u+d_v}{2\sqrt{d_ud_v}}
= \frac12 \, \d^{-a} AG(G) ,
\end{aligned}
$$
and the equality in the bound is attained if and only if $d_u=d_v=\d$ for every $uv \in E(G)$, i.e., $G$ is a regular graph.

\medskip

Assume now that $a<0$.
We can assume that $x \le y$.
Thus,
$\p U/\p x (x,y)>0$ for $\d \le x \le y \le \D$, and so,
$$
U(x,y)
\le U(y,y)
= \frac{1}{2y^{a}}
\le \frac12 \, \D^{-a},
$$
and the equality in the bound is attained if and only if $x=y=\D$.
Hence,
$$
\begin{aligned}
\frac1{d_u^{a}+d_v^{a}}
& \le \frac12 \, \D^{-a} \, \frac{d_u+d_v}{2\sqrt{d_ud_v}} \,,
\\
ISD_a(G)
= \sum_{uv \in E(G)} \frac1{d_u^{a}+d_v^{a}}
& \le \frac12 \, \D^{-a} \!\!\!\!\! \sum_{uv \in E(G)} \!\! \frac{d_u+d_v}{2\sqrt{d_ud_v}}
= \frac12 \, \D^{-a} AG(G) ,
\end{aligned}
$$
and the equality in the bound is attained if and only if $d_u=d_v=\D$ for every $uv \in E(G)$, i.e., $G$ is a regular graph.
\end{proof}

Mili\v{c}evi\'c and Nikoli\'c defined in \cite{MN} the \emph{variable first Zagreb index} as
$$
M_1^{\a}(G) = \sum_{u\in V(G)} d_u^{\a},
$$
with $\a \in \RR$.
Note that $M_{1}^2$ is the first Zagreb index $M_1$,
$M_{1}^{-1}$ is the the inverse index $ID$,
$M_{1}^{-1/2}$ is the zeroth-order Randi\'c index,
$M_{1}^3$ is the forgotten index $F$, etc.

\begin{theorem} \label{t:m1}
If $G$ is a graph with $m$ edges, and $a \in \RR$, then
\begin{eqnarray}
\label{Eq4}
ISD_a(G) + M_1^{a+1}(G)
\ge \frac52 \,m ,
\qquad & \text{if } \, a > 0,
\\
\label{Eq5}
ISD_a(G) + M_1^{a+1}(G)
\ge 2m ,
\qquad & \text{if } \, a < 0.
\end{eqnarray}
The equality in the first bound is attained if and only if $G$ is a union of path graphs $P_2$.
\end{theorem}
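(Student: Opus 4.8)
The plan is to reduce the whole inequality to a one-variable estimate by pairing the two sums edge by edge. The crucial preliminary observation is the handshaking-type identity
$$
M_1^{a+1}(G)
= \sum_{u\in V(G)} d_u^{a+1}
= \sum_{u\in V(G)} d_u\, d_u^{a}
= \sum_{uv\in E(G)} \big( d_u^{a}+d_v^{a} \big),
$$
which holds because each vertex $u$ contributes its term $d_u^{a}$ exactly once for each of the $d_u$ edges incident to it. Granting this, I would write $t_{uv}=d_u^{a}+d_v^{a}>0$ for each edge and combine the indices into
$$
ISD_a(G)+M_1^{a+1}(G)
= \sum_{uv\in E(G)} \left( \frac{1}{d_u^{a}+d_v^{a}} + \big(d_u^{a}+d_v^{a}\big) \right)
= \sum_{uv\in E(G)} \varphi(t_{uv}),
$$
where $\varphi(t)=t+1/t$. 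The problem then becomes: how small can $\varphi(t_{uv})$ be on each edge?

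Next I would use the elementary facts that $\varphi(t)=t+1/t$ has global minimum $\varphi(1)=2$, is strictly decreasing on $(0,1]$ and strictly increasing on $[1,\infty)$. Since $G$ has no isolated vertices, every degree satisfies $d_u\ge 1$, so $d_u^{a}\ge 1$ when $a>0$ and $d_u^{a}\le 1$ when $a<0$. If $a>0$ then $t_{uv}=d_u^{a}+d_v^{a}\ge 2$, and monotonicity of $\varphi$ on $[1,\infty)$ gives $\varphi(t_{uv})\ge\varphi(2)=\tfrac52$; summing over the $m$ edges yields \eqref{Eq4}. If $a<0$ I would not even need the range of $t_{uv}$: the global bound $\varphi(t)\ge 2$ (equivalently AM--GM, $t+1/t\ge 2$) applied to each edge immediately gives \eqref{Eq5}.

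For the equality case of \eqref{Eq4}, equality forces $\varphi(t_{uv})=\tfrac52$, hence $t_{uv}=2$, on every edge; since $a>0$ and $d_u,d_v\ge 1$ force $d_u^{a}+d_v^{a}\ge 2$ with equality only when $d_u=d_v=1$, every edge must join two degree-one vertices, so each connected component is a single edge $P_2$, and conversely such graphs attain equality. I expect no real obstacle here: the one content-bearing step is spotting the identity $M_1^{a+1}(G)=\sum_{uv\in E(G)}(d_u^{a}+d_v^{a})$, after which everything reduces to the routine analysis of $\varphi(t)=t+1/t$ on the two degree regimes.
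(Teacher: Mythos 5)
Your proposal is correct and follows essentially the same route as the paper's own proof: the handshaking identity $M_1^{a+1}(G)=\sum_{uv\in E(G)}(d_u^{a}+d_v^{a})$, the per-edge reduction to $\varphi(t)=t+1/t$ with its monotonicity on $(0,1]$ and $[1,\infty)$, the bound $\varphi(t_{uv})\ge\varphi(2)=\tfrac52$ when $a>0$ and the global bound $\varphi\ge 2$ when $a<0$, and the same equality analysis forcing $d_u=d_v=1$ on every edge. Your treatment of the equality case is, if anything, slightly more explicit than the paper's.
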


\begin{proof}
Recall that we have for any function $h$
$$
\sum_{uv \in E(G)} \big( h(d_u)+ h(d_v) \big)
= \sum_{u \in V(G)} d_u h(d_u) .
$$
In particular,
$$
\sum_{uv \in E(G)} \big( d_u^{a}+d_v^{a} \big)
= \sum_{u \in V(G)} d_u^{a+1}
= M_1^{a+1}(G).
$$

The function $f(x)=x+1/x$ is strictly decreasing on $(0,1]$ and strictly increasing on $[1,\infty)$, and so, $f(x) \ge f(1) = 2$ for every $x>0$.
Hence,
$$
\begin{aligned}
\frac{1}{d_u^{a}+d_v^{a}} + d_u^{a}+d_v^{a}
& \ge 2,
\\
ISD_a(G) + M_1^{a+1}(G)
& \ge 2m .
\end{aligned}
$$

If $a>0$, then $d_u^{a}+d_v^{a} \ge 2$ and
$$
\begin{aligned}
\frac{1}{d_u^{a}+d_v^{a}} + d_u^{a}+d_v^{a}
& \ge f(2)
= \frac52 \,,
\\
ISD_a(G) + M_1^{a+1}(G)
& \ge \frac52 \,m .
\end{aligned}
$$

The previous argument gives that the equality in this bound is attained if and only if $d_u=d_v=1$ for every $uv \in E(G)$,
i.e., $G$ is a union of path graphs $P_2$.
\end{proof}

\begin{theorem} \label{t:m1d}
Let $G$ be a graph with minimum degree $\d$ and $m$ edges, and $a \in \RR$.

$(1)$ If $a > 0$, then
$$
\begin{aligned}
ISD_a(G) + M_1^{a+1}(G)
\ge \Big( 2\d^a + \frac1{2\d^a} \Big) m .
\end{aligned}
$$

$(2)$ If $\d>1$ and $a \le -\log 2/\log \d$, then
$$
\begin{aligned}
ISD_a(G) + M_1^{a+1}(G)
\ge \Big( 2\d^a + \frac1{2\d^a} \Big) m .
\end{aligned}
$$

The equality in each bound is attained if and only if $G$ is regular.
\end{theorem}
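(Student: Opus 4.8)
The plan is to follow the proof of Theorem \ref{t:m1} but to track the size of each summand more carefully. As there, I would first recall the handshake-type identity
$$
\sum_{uv \in E(G)} \big( d_u^{a}+d_v^{a} \big) = M_1^{a+1}(G),
$$
so that the left-hand side of both inequalities becomes a single edge sum
$$
ISD_a(G) + M_1^{a+1}(G) = \sum_{uv \in E(G)} f\big( d_u^{a}+d_v^{a} \big), \qquad f(t) = t + \tfrac1t .
$$
The whole argument then reduces to bounding $f(t_{uv})$ from below for $t_{uv} = d_u^{a}+d_v^{a}$, using that $f$ is strictly decreasing on $(0,1]$ and strictly increasing on $[1,\infty)$. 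The point that distinguishes this result from Theorem \ref{t:m1} is that I compare each $t_{uv}$ not with the global minimizer $t=1$ but with the sharper edge-independent value $2\d^a$, and this comparison is valid only if all the $t_{uv}$ lie on a single monotone branch of $f$.

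For part $(1)$, with $a>0$ the map $x\mapsto x^a$ is increasing, so $d_u,d_v \ge \d$ gives $t_{uv} \ge 2\d^a$. Since $\d \ge 1$ and $a>0$ we have $2\d^a \ge 2 > 1$, hence $t_{uv}$ and the comparison point $2\d^a$ both lie in the increasing branch $[1,\infty)$; monotonicity of $f$ there yields $f(t_{uv}) \ge f(2\d^a) = 2\d^a + 1/(2\d^a)$, and summing over the $m$ edges gives the claimed bound.

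Part $(2)$ is where the hypothesis $a \le -\log 2/\log \d$ (with $\d>1$) earns its place, and I expect this to be the main subtlety. For $a<0$ the map $x\mapsto x^a$ is decreasing, so now $d_u,d_v\ge\d$ gives the reverse estimate $t_{uv} \le 2\d^a$. To reuse the decreasing branch of $f$ I must guarantee $2\d^a \le 1$, i.e. $\d^a \le 1/2$; taking logarithms and using $\log\d>0$ (valid since $\d>1$) this reads $a\log\d \le -\log 2$, that is exactly $a \le -\log 2/\log\d$. Under this hypothesis every $t_{uv}$ and the point $2\d^a$ lie in $(0,1]$, where $f$ is decreasing, so $t_{uv}\le 2\d^a$ forces $f(t_{uv}) \ge f(2\d^a) = 2\d^a + 1/(2\d^a)$, and summing over the $m$ edges again gives the bound.

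Finally, for the equality statement, in both cases equality forces $f(t_{uv}) = f(2\d^a)$ on every edge, and since each $t_{uv}$ sits on a branch where $f$ is strictly monotone this gives $t_{uv} = 2\d^a$, i.e. $d_u^{a}+d_v^{a} = 2\d^a$ for every $uv\in E(G)$. Because $d_u^a,d_v^a$ are each bounded by $\d^a$ from the appropriate side (below when $a>0$, above when $a<0$) while summing to $2\d^a$, this forces $d_u=d_v=\d$ on every edge; as $G$ has no isolated vertices, every vertex has degree $\d$ and $G$ is regular. The converse is the routine check that a $\d$-regular graph makes every $t_{uv}$ equal to $2\d^a$, so both sides coincide.
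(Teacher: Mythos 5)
Your proof is correct and takes essentially the same approach as the paper's: both rewrite $ISD_a(G)+M_1^{a+1}(G)$ as $\sum_{uv\in E(G)} f\big(d_u^a+d_v^a\big)$ with $f(t)=t+1/t$ via the handshake identity, then compare each $d_u^a+d_v^a$ with $2\delta^a$ on the appropriate monotone branch of $f$ (the increasing branch $[1,\infty)$ when $a>0$, the decreasing branch $(0,1]$ when $\delta>1$ and $a\le -\log 2/\log\delta$), and characterize equality by $d_u=d_v=\delta$ on every edge. Your treatment of the equality case simply spells out in more detail the same argument the paper gives tersely.
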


\begin{proof}
If $a>0$, then
$d_u^{a}+d_v^{a} \ge 2 \d^a \ge 2 >1$.
The argument in the proof of Theorem \ref{t:m1} gives
$$
\begin{aligned}
\frac{1}{d_u^{a}+d_v^{a}} + d_u^{a}+d_v^{a}
& \ge f(2 \d^a)
= 2\d^a + \frac1{2\d^a} \,,
\\
ISD_a(G) + M_1^{a+1}(G)
& \ge \Big( 2\d^a + \frac1{2\d^a} \Big) m .
\end{aligned}
$$

If $\d>1$ and $a \le -\log 2 / \log \d < 0$, then
$2 \d^a \le 1$
and
$d_u^{a}+d_v^{a} \le 2 \d^a \le 1$.
Thus,
$$
\begin{aligned}
\frac{1}{d_u^{a}+d_v^{a}} + d_u^{a}+d_v^{a}
& \ge f(2 \d^a)
= 2\d^a + \frac1{2\d^a} \,,
\\
ISD_a(G) + M_1^{a+1}(G)
& \ge \Big( 2\d^a + \frac1{2\d^a} \Big) m .
\end{aligned}
$$

The previous argument gives that the equality in each bound is attained if and only if $d_u^{a}+d_v^{a} = 2 \d^a$ for every $uv \in E(G)$,
i.e., $d_u=d_v=\d$ for every $uv \in E(G)$;
and this holds if and only if $G$ is regular.
\end{proof}

\begin{theorem} \label{t:m1D}
Let $G$ be a graph with maximum degree $\D$ and $m$ edges, and $a > 0$.
Then
$$
\begin{aligned}
ISD_a(G) + M_1^{a+1}(G)
\le \Big( 2\D^a + \frac1{2\D^a} \Big) m ,
\end{aligned}
$$
and the equality in the bound is attained if and only if $G$ is regular.
\end{theorem}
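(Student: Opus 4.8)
The plan is to mirror the argument used for Theorem \ref{t:m1d}(1), replacing the lower bound $2\d^a$ by the upper bound $2\D^a$ and exploiting the monotonicity of $f(x)=x+1/x$ on the appropriate branch. First I would recall the handshake-type identity
$$
\sum_{uv \in E(G)} \big( d_u^{a}+d_v^{a} \big) = M_1^{a+1}(G),
$$
already established in the proof of Theorem \ref{t:m1}. This lets me rewrite the quantity of interest as a single edge sum, $ISD_a(G)+M_1^{a+1}(G) = \sum_{uv\in E(G)} f(d_u^a+d_v^a)$, so that an edgewise upper bound on the integrand will immediately give the global inequality.

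The key observation is that, since $a>0$ and every vertex has degree at least $\d \ge 1$, one has $d_u^a + d_v^a \ge 2\d^a \ge 2 > 1$ for every edge; thus the argument of $f$ always lies in the interval $[1,\infty)$ on which $f$ is strictly increasing. Combining this with the elementary bound $d_u^a + d_v^a \le 2\D^a$ yields
$$
\frac{1}{d_u^{a}+d_v^{a}} + d_u^{a}+d_v^{a} = f\big(d_u^a+d_v^a\big) \le f\big(2\D^a\big) = 2\D^a + \frac1{2\D^a}
$$
for every $uv \in E(G)$. Summing over the $m$ edges and invoking the identity above then produces the claimed bound.

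For the equality case, strict monotonicity of $f$ on $[1,\infty)$ makes the condition $f(d_u^a+d_v^a)=f(2\D^a)$ on each edge equivalent to $d_u^a+d_v^a = 2\D^a$, i.e. $d_u=d_v=\D$ for every $uv\in E(G)$; this holds precisely when $G$ is regular. I expect no genuine obstacle here, as the structure is identical to the earlier results. The only point requiring care is verifying that the argument $d_u^a+d_v^a$ never drops below $1$, which is exactly where the hypothesis $a>0$ (guaranteeing $\d^a \ge 1$) is essential; this is also why the companion upper bound cannot be extended to $a<0$ without the additional degree restrictions imposed in Theorem \ref{t:m1d}(2).
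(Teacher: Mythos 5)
Your proposal is correct and follows essentially the same route as the paper's own proof: both rewrite $ISD_a(G)+M_1^{a+1}(G)$ as an edge sum of $f(d_u^a+d_v^a)$ with $f(x)=x+1/x$, use that $a>0$ forces $1<2\le d_u^a+d_v^a\le 2\D^a$ so that $f$ is increasing on the relevant range, and deduce equality precisely when $d_u=d_v=\D$ on every edge, i.e.\ when $G$ is regular. No gaps; your closing remark on why $a>0$ is essential matches the paper's treatment of the $a<0$ case in Theorem~\ref{t:m1d}(2).
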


\begin{proof}
If $a>0$, then
$1 < 2 \le d_u^{a}+d_v^{a} \le 2 \D^a$.
The argument in the proof of Theorem \ref{t:m1} gives
$$
\begin{aligned}
\frac{1}{d_u^{a}+d_v^{a}} + d_u^{a}+d_v^{a}
& \le f(2 \D^a)
= 2\D^a + \frac1{2\D^a} \,,
\\
ISD_a(G) + M_1^{a+1}(G)
& \le \Big( 2\D^a + \frac1{2\D^a} \Big) m .
\end{aligned}
$$

The previous argument gives that the equality in the bound is attained if and only if $d_u^{a}+d_v^{a} = 2 \D^a$ for every $uv \in E(G)$,
i.e., $d_u=d_v=\D$ for every $uv \in E(G)$;
and this holds if and only if $G$ is regular.
\end{proof}

We need the following well known result, that provides a converse of the
Cauchy-Schwarz inequality (see, e.g., \cite[Lemma 3.4]{MRS}).

\begin{lemma} \label{l:PS2}
If $a_j,b_j\ge 0$ and $\omega b_j \le a_j \le \O b_j$ for $1\le j \le k$, then
$$
\Big(\sum_{j=1}^k a_j^2 \Big)^{1/2} \Big(\sum_{j=1}^k b_j^2 \Big)^{1/2}
\leq \frac12 \Big(\,\sqrt{\frac{\O}{\omega}}+ \sqrt{\frac{\omega}{\O}} \;\,\Big)\sum_{j=1}^k a_j\,b_j\,.
$$
If $a_j>0$ for some $1\le j \le k$, then the equality holds if and
only if $\omega=\O$ and $a_j=\omega b_j$ for every $1\le j \le k$.
\end{lemma}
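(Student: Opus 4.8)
The plan is to prove the inequality by the classical P\'olya-Szeg\H{o} argument, collapsing the two-sided bound $\omega b_j \le a_j \le \O b_j$ into a single quadratic inequality and then applying the arithmetic-geometric mean inequality. First I would note that the hypothesis is equivalent to
$$
(a_j - \omega b_j)(\O b_j - a_j) \ge 0 \qquad (1\le j\le k),
$$
which is clear when $b_j>0$ and trivial when $b_j=0$ (then $a_j=0$). Expanding the product and rearranging gives the pointwise estimate $a_j^2 + \omega\O\, b_j^2 \le (\omega+\O)\, a_j b_j$, and summing over $j$ yields
$$
\sum_{j=1}^k a_j^2 + \omega\O \sum_{j=1}^k b_j^2 \le (\omega+\O)\sum_{j=1}^k a_j b_j .
$$

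Next I would bound the left-hand side from below using $2\sqrt{PQ}\le P+Q$ (valid for $P,Q\ge 0$) with $P=\sum_j a_j^2$ and $Q=\omega\O\sum_j b_j^2$, obtaining
$$
2\sqrt{\omega\O}\,\Big(\sum_{j=1}^k a_j^2\Big)^{1/2}\Big(\sum_{j=1}^k b_j^2\Big)^{1/2} \le \sum_{j=1}^k a_j^2 + \omega\O\sum_{j=1}^k b_j^2 .
$$
Chaining the two displays and dividing by $2\sqrt{\omega\O}$ (finite, since the stated constant already presupposes $\omega>0$) gives
$$
\Big(\sum_{j=1}^k a_j^2\Big)^{1/2}\Big(\sum_{j=1}^k b_j^2\Big)^{1/2} \le \frac{\omega+\O}{2\sqrt{\omega\O}}\sum_{j=1}^k a_j b_j ,
$$
and the proof of the inequality is finished by the identity $\frac{\omega+\O}{2\sqrt{\omega\O}} = \frac12\big(\sqrt{\O/\omega}+\sqrt{\omega/\O}\big)$.

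For the equality statement I would trace back through the two inequalities just used. Equality in the summed pointwise estimate forces $(a_j-\omega b_j)(\O b_j-a_j)=0$ for every $j$, so each ratio $a_j/b_j$ equals $\omega$ or $\O$; equality in the mean inequality forces $\sum_j a_j^2=\omega\O\sum_j b_j^2$. Sufficiency of the stated condition is immediate: if $\omega=\O$ and $a_j=\omega b_j$ for all $j$, both sides collapse to $\omega\sum_j b_j^2$. The converse is the step I expect to be the main obstacle. The pointwise condition by itself allows the ratios to split between the two extremes $\omega$ and $\O$, so the whole force of the equality case must come from feeding this split back into the identity $\sum_j a_j^2=\omega\O\sum_j b_j^2$, together with the hypothesis that some $a_j>0$; partitioning the indices according to which extreme each ratio attains and comparing the two sides of that identity is the delicate bookkeeping that isolates the extremal configurations.
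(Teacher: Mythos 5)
Your proof of the displayed inequality is correct, but note that the paper itself offers no proof to compare against: Lemma \ref{l:PS2} is quoted as a known result with a pointer to \cite[Lemma 3.4]{MRS}, so the benchmark is the standard P\'olya--Szeg\H{o}/Cassels argument --- which is exactly what you reproduce, correctly: the factorization $(a_j-\omega b_j)(\Omega b_j-a_j)\ge 0$, summation, the AM--GM step with $P=\sum_j a_j^2$ and $Q=\omega\Omega\sum_j b_j^2$, and division by $2\sqrt{\omega\Omega}$ (your conventions that $\omega>0$ and that $a_j=0$ whenever $b_j=0$ are the right ones). The sufficiency half of the equality claim is also fine.

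The genuine gap is the step you yourself flagged and postponed: the ``only if'' direction. It is not merely delicate --- it is false, and carrying out exactly the bookkeeping you outline shows this. Equality forces (i) $a_j\in\{\omega b_j,\Omega b_j\}$ for every $j$, and (ii) $\sum_j a_j^2=\omega\Omega\sum_j b_j^2$. Setting $B_\omega=\sum_{j:\,a_j=\omega b_j}b_j^2$ and $B_\Omega=\sum_{j:\,a_j=\Omega b_j}b_j^2$, condition (ii) reads $(\Omega-\omega)\,(\omega B_\omega-\Omega B_\Omega)=0$, so either $\omega=\Omega$ (and then (i) gives $a_j=\omega b_j$ for all $j$), or the ratios split between the two extremes subject to the balance $\omega B_\omega=\Omega B_\Omega$. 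The second alternative is realizable: take $k=2$, $\omega=1$, $\Omega=4$, $b_1=2$, $b_2=1$, $a_1=2$, $a_2=4$. All hypotheses hold and some $a_j>0$, yet both sides of the inequality equal $10$ while $\omega\neq\Omega$. Hence the correct characterization of equality for $0<\omega<\Omega$ is (i) together with $\omega B_\omega=\Omega B_\Omega$; the clause stated in the lemma is wrong as written (a defect inherited from the cited source, not introduced by you). Be aware that this matters downstream: in the proof of Theorem \ref{t:M1bis} the ``only if'' clause is invoked to deduce that equality in the upper bound of \eqref{Eq6} forces $G$ to be regular, and the same mechanism defeats that deduction for disconnected graphs --- e.g.\ for $a=1$ and $G$ the disjoint union of two triangles and $K_4$ (so $\delta=2$, $\Delta=3$, $m=12$) one gets $ISD_1(G)\,M_1^{2}(G)=\frac52\cdot 60=150=\frac{(\Delta+\delta)^2}{4\Delta\delta}\,m^2$ with $G$ not regular. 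So either the equality clause of the lemma must be corrected (and the theorem's equality discussion adjusted, e.g.\ restricted to connected graphs, where condition (i) on all edges does force regularity), or your proof cannot be completed as stated.
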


Recall that a $(\D,\d)$-biregular graph is a bipartite graph for which any vertex in one side of the given bipartition has degree $\D$
and any vertex in the other side of the bipartition has degree $\d$.

\begin{theorem} \label{t:M1bis}
If $G$ is a graph with $m$ edges, maximum degree $\D$ and minimum degree $\d$,
and $a \in \RR \setminus \{0\}$, then
\begin{equation}
\label{Eq6}
m^2
\le ISD_a(G) M_1^{a+1}(G)
\le \frac{(\D^{a}+\d^{a})^2}{4\D^{a}\d^{a}} \, m^2 .
\end{equation}
The equality in the upper bound is attained if and only if $G$ is regular.
The equality in the lower bound is attained if $G$ is regular or biregular.
Furthermore, if $G$ is a connected graph, then the equality in the lower bound is attained if and only if $G$ is a regular or biregular graph.
\end{theorem}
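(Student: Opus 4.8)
The plan is to express both $ISD_a(G)$ and $M_1^{a+1}(G)$ as sums over the edge set $E(G)$ and then to sandwich their product between the Cauchy--Schwarz inequality (for the lower bound) and its converse, Lemma \ref{l:PS2} (for the upper bound). The key preliminary identity is the one already exploited in Theorem \ref{t:m1},
$$
M_1^{a+1}(G) = \sum_{u \in V(G)} d_u^{a+1} = \sum_{uv \in E(G)} \big( d_u^{a} + d_v^{a} \big),
$$
so that $M_1^{a+1}(G)$ and $ISD_a(G) = \sum_{uv \in E(G)} (d_u^a+d_v^a)^{-1}$ are the sums of a positive quantity and its reciprocal over the very same index set $E(G)$.

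First I would apply the two inequalities to the edge-indexed quantities $(d_u^a+d_v^a)^{-1/2}$ and $(d_u^a+d_v^a)^{1/2}$, whose squares sum to $ISD_a(G)$ and $M_1^{a+1}(G)$ respectively, and whose termwise product sums to $\sum_{uv\in E(G)} 1 = m$. Cauchy--Schwarz then yields $m^2 \le ISD_a(G)\,M_1^{a+1}(G)$ at once. For the upper bound I would use that $d_u^a+d_v^a$ lies in $[2\d^a,2\D^a]$ when $a>0$ and in $[2\D^a,2\d^a]$ when $a<0$, so the ratio of the two quantities, namely $(d_u^a+d_v^a)^{-1}$, is pinned between $\omega = 1/(2\D^a)$ and $\O = 1/(2\d^a)$ (the roles of $\omega$ and $\O$ simply swap when $a<0$, which changes nothing since the resulting estimate is symmetric in them). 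Lemma \ref{l:PS2} then gives $ISD_a(G)^{1/2} M_1^{a+1}(G)^{1/2} \le \tfrac12\big(\sqrt{\O/\omega}+\sqrt{\omega/\O}\big)\,m$, and since $\sqrt{\O/\omega}+\sqrt{\omega/\O}$ simplifies to $(\D^a+\d^a)/\sqrt{\D^a\d^a}$, squaring produces exactly the upper bound in \eqref{Eq6}.

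The equality statements would then be read off from the equality cases of the two inequalities. For the upper bound, Lemma \ref{l:PS2} forces $\omega=\O$, i.e. $\D^a=\d^a$, which because $a\neq0$ means $\D=\d$ and hence $G$ regular. For the lower bound, equality in Cauchy--Schwarz is equivalent to $(d_u^a+d_v^a)^{-1/2}$ being a fixed multiple of $(d_u^a+d_v^a)^{1/2}$, i.e. to $d_u^a+d_v^a$ taking one and the same value $c$ on every edge. Both alternatives in the statement satisfy this: a regular graph gives $c=2\d^a$ and a biregular graph gives $c=\D^a+\d^a$, which settles the sufficiency direction for arbitrary $G$.

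The hard part will be the converse in the connected case, namely that constancy of $d_u^a+d_v^a$ across all edges forces $G$ to be regular or biregular. Here I would use that $t\mapsto t^a$ is injective for $a\neq0$, so on each edge the relation $d_u^a+d_v^a=c$ determines $d_v$ uniquely from $d_u$; consequently every vertex of a given degree $p$ has all of its neighbors of one fixed degree $q$ with $p^a+q^a=c$, and each such neighbor in turn has all of its neighbors back of degree $p$. Propagating this alternation along paths in the connected graph $G$ shows that at most the two degree values $p$ and $q$ occur, and that when $p\neq q$ they split $V(G)$ into the two sides of a bipartition; this is precisely the dichotomy ``$G$ regular or $(\D,\d)$-biregular'', completing the characterization.
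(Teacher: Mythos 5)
Your proposal is correct and follows essentially the same route as the paper's proof: the identity $M_1^{a+1}(G)=\sum_{uv\in E(G)}(d_u^a+d_v^a)$, Cauchy--Schwarz applied to $(d_u^a+d_v^a)^{\pm 1/2}$ for the lower bound, Lemma \ref{l:PS2} for the upper bound, and the same equality analysis (including the injectivity of $t\mapsto t^a$ and the propagation argument in the connected case). The only cosmetic difference is that you apply Lemma \ref{l:PS2} with the roles of $a_j$ and $b_j$ interchanged relative to the paper, which changes nothing since the resulting bound is symmetric in $\omega$ and $\O$.
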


\begin{proof}
Cauchy-Schwarz inequality gives
$$
\begin{aligned}
m^2
= \Big( \sum_{uv\in E(G)} \frac1{\sqrt{d_u^a + d_v^a}} \, \sqrt{d_u^a + d_v^a} \,\; \Big)^2
& \le \sum_{uv\in E(G)} \frac1{d_u^a + d_v^a} \sum_{uv\in E(G)} \big( d_u^a + d_v^a \big)
\\
& = ISD_a(G) M_1^{a+1}(G) .
\end{aligned}
$$
If $a>0$, then
$$
\begin{aligned}
2\d^a
\le d_u^a + d_v^a
& = \frac{ \sqrt{d_u^a + d_v^a}}{\frac1{\sqrt{d_u^a + d_v^a}}}
\le 2\D^a.
\end{aligned}
$$
If $a<0$, then
$$
\begin{aligned}
2\D^a
\le d_u^a + d_v^a
& = \frac{ \sqrt{d_u^a + d_v^a}}{\frac1{\sqrt{d_u^a + d_v^a}}}
\le 2\d^a.
\end{aligned}
$$
Lemma \ref{l:PS2} gives for every $a \neq 0$
$$
\begin{aligned}
m^2
& = \Big( \sum_{uv\in E(G)} \frac1{\sqrt{d_u^a + d_v^a}} \, \sqrt{d_u^a + d_v^a} \,\; \Big)^2 \\
& \ge \frac{\sum_{uv\in E(G)} \frac1{d_u^a + d_v^a} \sum_{uv\in E(G)} \big( d_u^a + d_v^a \big)}{\frac14\big( \frac{\D^{a/2}}{\d^{a/2}} + \frac{\d^{a/2}}{\D^{a/2}} \big)^2}
\\
& = \frac{4\D^{a}\d^{a}}{(\D^{a}+\d^{a})^2}\, ISD_a(G) M_1^{a+1}(G) .
\end{aligned}
$$
\indent
If $G$ is a regular graph, then the lower and upper bounds are the same, and they are equal to $ISD_a(G)M_1^{a+1}(G)$.

Assume now that the equality in the upper bound is attained.
Lemma \ref{l:PS2} gives $2\D^a=2\d^a$ and so,
$\D=\d$ and $G$ is regular.

If $G$ is a regular or biregular graph, then
\begin{equation}
\label{Eq7}
ISD_a(G) M_1^{a+1}(G)
= \frac{m}{\D^{a}+\d^{a}}\,(\D^{a}+\d^{a}) \, m
= m^2,
\end{equation}
and the lower bound is attained.

Assume now that $G$ is a connected graph.
By Cauchy-Schwarz inequality, the equality in the lower bound
is attained if only if there exists a constant $\eta$ such that,
for every $uv\in E(G)$,
\begin{equation} \label{eq:450}
\frac1{\sqrt{d_u^a + d_v^a}} = \eta \, \sqrt{d_u^a + d_v^a}\,,
\qquad
d_u^a + d_v^a=\eta^{-1} .
\end{equation}
If $uv,uw\in E(G)$, then
$$
\eta^{-1} = d_u^a + d_v^a
=d_u^a + d_w^a \,,
$$
and $d_w =d_v$, since $h(t)=t^a$ is a one to one function.
Thus, we conclude that \eqref{eq:450} is equivalent to the following:
for each vertex $u\in V(G)$, every neighbor of $u$ has the same degree.
Since $G$ is connected, this holds if and only if $G$ is regular or biregular.
\end{proof}

\section{Computational study of the $ISD_a$ index on random graphs}
\label{statistics}

Here we follow a recently introduced approach under which topological indices are applied to ensembles
of random graphs. Thus instead of computing the index of a single graph, the index average
value over a large number of random graphs is measured as a function of the random graph parameters;
see the application of this approach to Erd\H{o}s-R\'{e}nyi graphs and random regular graphs 
in~\cite{MMRS20,AMRS20,AIMS20,MMRS21}.

We consider random graphs $G$ from the standard Erd\H{o}s-R\'{e}nyi (ER) model $G(n,p)$, i.e., $G$
has $n$ vertices and each edge appears independently with probability $p \in (0,1)$.
The computational study of the $ISD_a$ index we perform below is justified by the random nature of the 
ER model: since a given parameter pair $(n,p)$ represents an infinite--size ensemble of ER graphs, the 
computation of a $ISD_a$ index on a single ER graph is irrelevant. In contrast, the computation
of $\left< ISD_a \right>$ over a large ensemble of ER graphs, all characterized by the same parameter pair 
$(n,p)$, may provide useful average information about the ensemble.
Also, we extend some of the inequalities derived in the previous Section to index average values.

\subsection{Scaling of the average $ISD_a$ index on random graphs}

In Fig.~\ref{Fig01}(a) we plot the average variable inverse sum deg index $\left< ISD_a(G) \right>$ as a 
function of the probability $p$ of ER graphs of size $n=1000$.
There, we show curves for $a\in[-2,2]$. As a reference we plot in different colors the curves corresponding
to $a=-1$ (blue), $a=0$ (red), and $a=1$ (green). Recall that $ISD_{-1}(G)=ISI(G)$ and 
$ISD_1(G)=\chi_{_{-1}}(G)$. While for $a=0$, $\left< ISD_0(G) \right>$ gives half of the average 
number of edges of the ER graph; that is,
\begin{equation}
\label{E}
\left< ISD_0(G) \right> = \sum_{uv \in E(G)} \frac{1}{d_u^0 + d_v^0} = \frac{1}{2} |E(G)| = \frac{1}{4} n(n-1)p \ .
\end{equation}

\begin{figure}[t]
\begin{center} 
\includegraphics[width=0.9\textwidth]{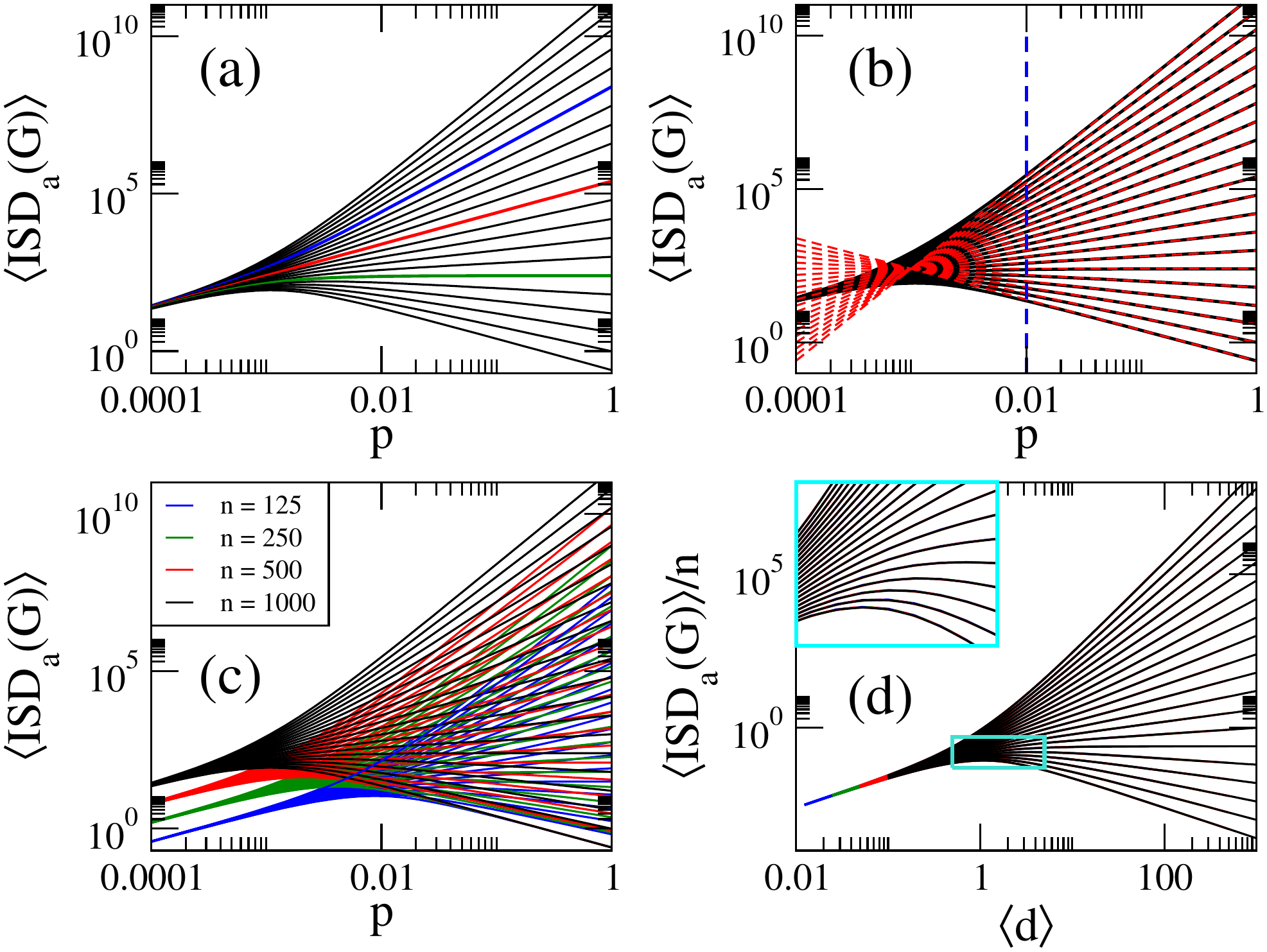}
\caption{\footnotesize{
(a,b) Average variable inverse sum deg index $\left< ISD_a(G) \right>$ as a 
function of the probability $p$ of Erd\H{o}s-R\'enyi graphs of size $n=1000$.
Here we show curves for $\alpha\in[-2,2]$ in steps of $0.2$ (from top to bottom).
Blue, red and green curves in (a) correspond to $a=-1$, $a=0$ and $a=1$, respectively.
The red dashed lines in (b) are Eq.~(\ref{avISDp}). The blue dashed line in (b)
marks $\left< d \right>= 10$.
(c) $\left< ISD_a(G) \right>$ as a function of the probability $p$ of 
ER graphs of four different sizes $n$.
(d) $\left< ISD_a(G) \right>/n$ as a function of the average degree $\left< d \right>$. 
Same curves as in panel (c). The inset in (d) is the enlargement of the cyan rectangle.
All averages are computed over $10^7/n$ random graphs.
}}
\label{Fig01}
\end{center}
\end{figure}

Also, from Fig.~\ref{Fig01}(a) we observe that the curves of $\left< ISD_a(G) \right>$ show three
different behaviors as a function of $p$ depending on the value of $a$:
For $a<a_0$, they grow for small $p$, approach a 
maximum value and then decrease when $p$ is further increased. 
For $a>a_0$, they are monotonically increasing functions of $p$.
For $a=a_0$ the curves saturate above a given value of $p$.
Here $a_0=1$.

Moreover, when $n p\gg 1$, we can write $d_u \approx d_v \approx \left<  d \right>$, with
\begin{equation}
\label{k}
\left< d \right> \approx (n-1)p .
\end{equation}
Therefore, for $n p\gg 1$, $\left< ISD_a(G) \right>$ is well approximated by:
\begin{equation}
\label{avISDp}
\left< ISD_a(G) \right> \approx \sum_{uv \in E(G)} \frac{1}{\left<  d \right>^a + \left<  d \right>^a} 
= |E(G)| \frac{1}{2\left<  d \right>^a} \approx  
\frac{n}{4} \left[ (n-1)p \right]^{1-a}.
\end{equation}
In Fig.~\ref{Fig01}(b), we show that Eq.~(\ref{avISDp}) (red-dashed lines) indeed describes well the data 
(thick black curves) for $np \ge 10$.

Now in Fig.~\ref{Fig01}(c) we show $\left< ISD_a(G) \right>$ 
as a function of the probability $p$ of ER random graphs of four different sizes $n$. 
It is quite clear from this figure that the blocks of curves, characterized by the different 
graph sizes, display similar curves but displaced on both axes. Thus, our next goal is to find the scaling 
parameters that make the blocks of curves to coincide.

First, we recall that the average degree $\left< d \right>$, see Eq.~(\ref{k}),
is known to scale both topological and spectral measures applied to ER graphs.
In particular, $\left< d \right>$ was shown to scale the normalized Randic index~\cite{MMRS20}, 
the normalized Harmonic~\cite{MMRS21} index, as well as several variable degree--based
indices~\cite{AIMS20} on ER graphs. Thus, we expect $\left< ISD_a(G) \right>\propto f(\left< d \right>)$.
Second, we observe in Fig.~\ref{Fig01}(c) that the effect of 
increasing the graph size is to displace the blocks of curves $\left< ISD_a(G) \right>$
vs.~$p$, characterized by the different graph sizes, 
upwards in the $y-$axis. Moreover, the fact that these blocks of curves, plotted in semi-log 
scale, are shifted the same amount on the $y-$axis when doubling $n$ is a clear signature 
of scalings of the form $\left< ISD_a(G) \right> \propto n^\beta$. 
By plotting $\left< ISD_a(G) \right>$ vs.~$n$ for given values of $p$ (not shown here) we conclude that 
$\beta=1$ for all $a$. 

Therefore, in Fig.~\ref{Fig01}(d) we plot 
$\left< ISD_a(G) \right>/n$ as a function of $\left< d \right>$ showing 
that all curves are now properly scaled; i.e.~the blocks of curves painted in different colors for different 
graph sizes fall on top of each other (see a detailed view in the inset of this figure). 
Moreover, following Eq.~(\ref{avISDp}), we obtain
\begin{equation}
\label{avISDk}
\frac{\left< ISD_a(G) \right>}{n} \approx \frac{1}{4} \left< d \right>^{1-a}.
\end{equation}
We have verified that Eq.~(\ref{avISDk}) is valid when $\left< d \right>\ge 10$.

\subsection{Inequalities of the average $ISD_a$ index on random graphs}

Most inequalities obtained in the previous Section are not restricted
to any particular type of graph. Thus, they should also be valid for random graphs and, moreover,
can be extended to index average values, as needed in computational studies of random graphs.

Now, in order to ease the computational validation of some of the inequalities derived in the previous 
Section, we:
\begin{itemize}
\item[(i)] write the right inequality of Eq.~(\ref{Eq1}) in Theorem~\ref{t:chi} as
\begin{equation}
0 \le \left< 2^{a-1} \chi_{_{-a}}(G) - ISD_a(G) \right> , \qquad \text{if } \, a > 1,
\label{Eq1av}
\end{equation}
\item[(ii)] write the left inequality of Eq.~(\ref{Eq2}) in Theorem~\ref{t:chi} as
\begin{equation}
0 \le \left< ISD_a(G) - 2^{a-1} \chi_{_{-a}}(G) \right> , \qquad \text{if } \, 0< a < 1,
\label{Eq2av}
\end{equation}
\item[(iii)] write the inequality of Eq.~(\ref{Eq3}) in Theorem~\ref{t:chi} as
\begin{equation}
0 \le \left< 2^{a-1} \chi_{_{-a}}(G) - ISD_a(G) \right> , \qquad \text{if } \, a < 0 ,
\label{Eq3av}
\end{equation}
\item[(iv)] write the inequality of Eq.~(\ref{Eq4}) in Theorem~\ref{t:m1} as
\begin{equation}
 \frac52 \, \left< m \right> \le \left< ISD_a(G) + M_1^{a+1}(G) \right>,
\qquad \text{if } \, a > 0,
\label{Eq4av}
\end{equation}
\item[(v)] write the inequality of Eq.~(\ref{Eq5}) in Theorem~\ref{t:m1} as
\begin{equation}
2 \, \left< m \right> \le \left< ISD_a(G) + M_1^{a+1}(G) \right>,
\qquad \text{if } \, a < 0,
\label{Eq5av}
\end{equation}
and
\item[(vi)] write the left inequality of Eq.~(\ref{Eq6}) in Theorem~\ref{t:M1bis} as
\begin{equation}
\left< m^2 \right>
\le \left< ISD_a(G) M_1^{a+1}(G) \right> .
\label{Eq6av}
\end{equation}
\end{itemize}

Therefore, in Figs.~\ref{Fig02}(a-f) we plot the r.h.s.~of the inequalities~(\ref{Eq1av}-\ref{Eq6av}), 
respectively, as a function of the probability $p$ of ER graphs of size $n=100$.

\begin{figure}[t!]
\begin{center}
\includegraphics[width=0.8\textwidth]{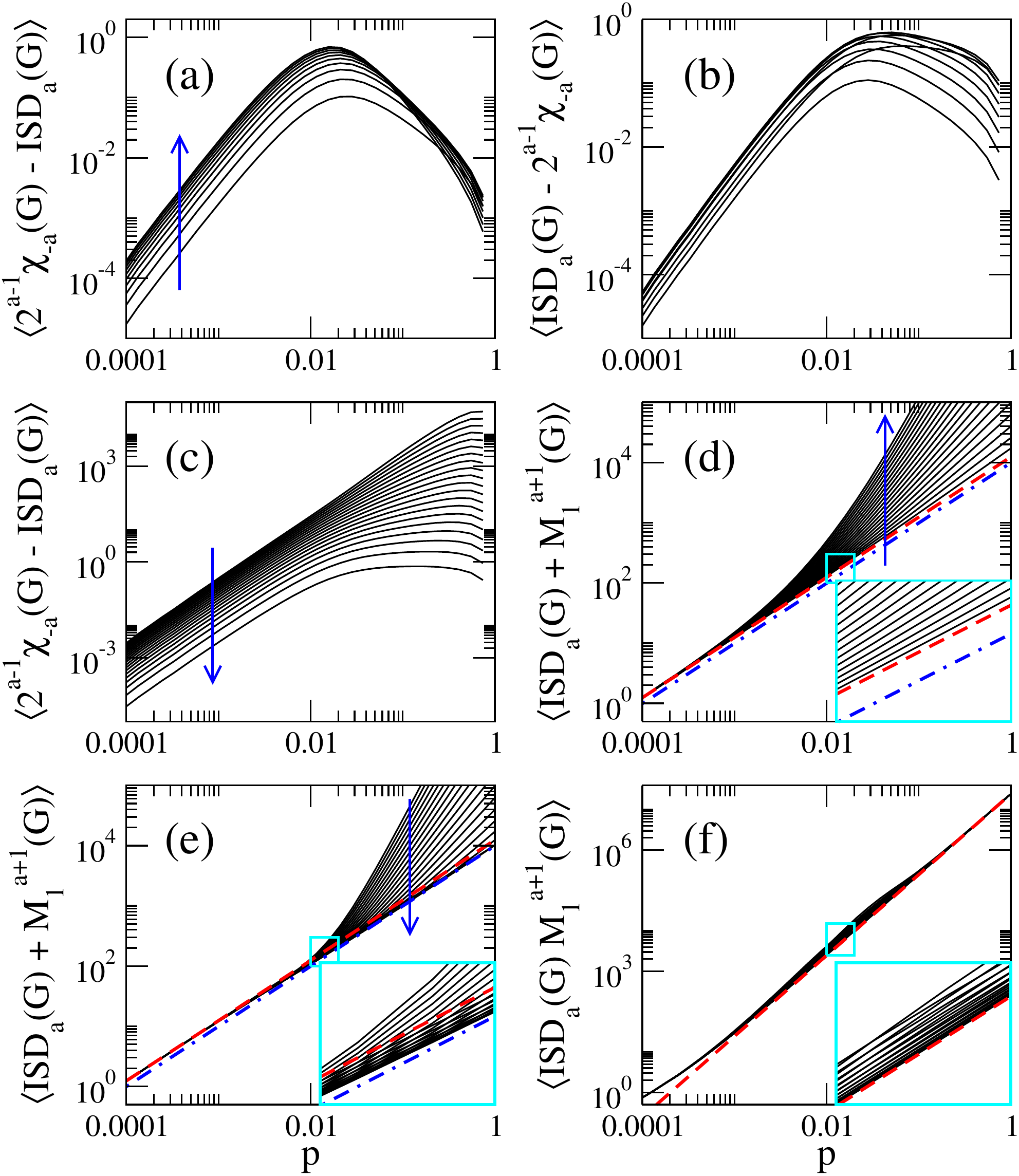}
\caption{\footnotesize{
In panels (a-f) we plot the r.h.s.~of relations~(\ref{Eq1av}-\ref{Eq6av}), respectively,
as a function of the probability $p$ of Erd\H{o}s-R\'enyi graphs of size $n=100$.
In (a) $a\in[1.1,2]$ in steps of 0.1, in (b) $a\in[0.1,0.9]$ in steps of 0.1, 
in (c) $a\in[-2,-0.1]$ in steps of 0.1, in (d) $a\in[0.1,2]$ in steps of 0.1,
in (e) $a\in[-2,-0.1]$ in steps of 0.1, and in (f) $a\in[-2,2]$ in steps of 0.2.
Dashed lines in (d,e) are $(5/2)\left< m \right>$ (red) and $2\left< m \right>$ (blue).
The red dashed line in (f) is $\left< m^2 \right>$. Here we used $\left< m \right>=n(n-1)p/2$.
Insets in (d-f) are enlargements of the cyan rectangles of the corresponding main panels.
Blue arrows in panels (a,c-e) indicate increasing $a$.
All averages are computed over $10^7/n$ random graphs.
}}
\label{Fig02}
\end{center}
\end{figure}

In particular, since the curves in Figs.~\ref{Fig02}(a-c) are all positive, the 
inequalities~(\ref{Eq1av}-\ref{Eq3av}) are easily validated. 
Now, in order validate inequalities~(\ref{Eq4av},\ref{Eq5av}) we include (as dashed lines) in 
both Fig.~\ref{Fig02}(d) and Fig.~\ref{Fig02}(e) the functions $(5/2)\left< m \right>$ vs.~$p$ (red) and 
$2\left< m \right>$ vs.~$p$ (blue); where we used $\left< m \right>=n(n-1)p/2$.
Then, we can clearly see that all curves $\left< ISD_a(G) + M_1^{a+1}(G) \right>$ vs.~$p$ in
Fig.~\ref{Fig02}(d) lie above the red dashed line, corresponding to $(5/2)\left< m \right>$;
while all curves $\left< ISD_a(G) + M_1^{a+1}(G) \right>$ vs.~$p$ in
Fig.~\ref{Fig02}(e) lie above the blue dot-dashed line, corresponding to $2\left< m \right>$.
This can be better appreciated in the enlargements shown in the panel insets.
Finally, in Fig.~\ref{Fig02}(f) we include, as a red dashed line, te function $\left< m^2 \right>$ vs.~$p$ to
clearly show that all curves $\left< ISD_a(G) M_1^{a+1}(G) \right>$ vs.~$p$ lie above it, as stated
in inequality~(\ref{Eq6av}). Moreover, note that the equality in (\ref{Eq6av}) is attained for $p\to 1$.
This is indeed expected since for $np\gg 1$ we can write
$$
\begin{aligned}
\left< ISD_a(G) M_1^{a+1}(G) \right> & =
\left< \sum_{uv \in E(G)} \frac{1}{d_u^a + d_v^a} \sum_{uv \in E(G)} d_u^a + d_v^a \right> 
\\ & \approx
\left< \sum_{uv \in E(G)} \frac{1}{\left<  d \right>^a + \left<  d \right>^a} \sum_{uv \in E(G)} \left<  d \right>^a + \left<  d \right>^a \right>
\\ & = \left< \frac{m}{\left<  d \right>^a + \left<  d \right>^a} m (\left<  d \right>^a + \left<  d \right>^a) \right>
= \left< m^2 \right> ,
\end{aligned}
$$
which we have observed to be valid for several graph sizes when $\left< d \right> \ge 10$ .

\section{Summary}

In this work we performed analytical and computational studies of the variable inverse sum deg index $ISD_a(G)$.
First, we analytically obtained new inequalities connecting $ISD_a(G)$ with other well--known topological indices
such as the Randi\'c index, the general sum-connectivity index, the geometric-arithmetic index, the 
arithmetic-geometric index, as well as the variable first Zagreb index.
Then, we computationally validated some of the obtained inequalities on ensembles of Erd\H{o}s-R\'enyi graphs 
$G(n,p)$ characterized by $n$ vertices connected independently with probability $p \in (0,1)$.
Additionally, we showed that the ratio $\bra ISD_a(G) \ket/n$ depends only on the average degree $\bra d \ket = (n-1)p$.

\begin{acknowledgements}
The research of W.C., J.M.R. and J.M.S. was supported by a grant from Agencia Estatal de Investigaci\'on (PID2019-106433GBI00/AEI/10.13039/501100011033), Spain.
J.M.R. was supported by the Madrid Government (Comunidad de Madrid-Spain) under the Multiannual Agreement with UC3M in the line of Excellence of University Professors (EPUC3M23), and in the context of the V PRICIT (Regional Programme of Research and Technological Innovation). 
\end{acknowledgements}

%
\section*{Conflict of interest}
The authors declare that they have no conflict of interest.



\end{document}